\documentclass{article}
\usepackage[utf8]{inputenc}
\DeclareUnicodeCharacter{2010}{-}

\usepackage[dvipsnames]{xcolor}

\usepackage{a4wide,amsmath,amssymb,amsthm}
\usepackage{fullpage}
\usepackage{enumerate}
\usepackage{latexsym,epsfig,graphicx}
\usepackage{apacite}
\usepackage[ruled, commentsnumbered, linesnumbered, vlined]{algorithm2e}
\usepackage{pdflscape}
\usepackage{float}\usepackage{float}

\newtheorem{theorem}{Theorem}
\newtheorem{lemma}[theorem]{Lemma}
\newtheorem{proposition}[theorem]{Proposition}

\newtheorem{corollary}[theorem]{Corollary}

\newcommand{\rafaelC}[1]{\textcolor{black}{#1}}

\newcommand{\marcioC}[1]{\textcolor{black}{#1}}

\title{Extended formulation and valid inequalities for the multi-item inventory lot-sizing problem with supplier selection}

\author{ 
    Leopoldo E. Cárdenas-Barrón {\thanks{Tecnologico de Monterrey, School of Engineering and Sciences,  Ave. Eugenio Garza Sada 2501, Monterrey, N.L., Mexico, 64849. ({\tt lecarden@tec.mx})}}
    \and Rafael A. Melo {\thanks{Universidade Federal da Bahia, Departamento de Ci\^{e}ncia da Computa\c{c}\~{a}o, Computational Intelligence and Optimization Research Lab (CInO), Salvador, Brazil.  ({\tt melo@dcc.ufba.br})}} 
	\and Marcio C. Santos {\thanks{Universidade fereral do Ceará, Campus Russas, Russas, Brazil.   ({\tt marciocs@ufc.br})}}
  }

\begin{document}

\maketitle

\begin{abstract}
    This paper considers the multi-item inventory lot-sizing problem with supplier selection. The problem consists \rafaelC{of} determining an optimal purchasing plan in order to satisfy dynamic deterministic demands for multiple items over a finite planning horizon, taking into account the fact that multiple suppliers are available to purchase from.
    As the complexity of the problem was an open question, we show that it is NP-hard.
    We propose a facility location extended formulation for the problem which can be preprocessed based on the cost structure and describe \rafaelC{new valid inequalities in the original space of variables}. Furthermore, we study the projection of the extended formulation into the original space and show the connection between the inequalities generated by this projection and the \rafaelC{newly proposed inequalities}. Additionally, we present a simple and easy to implement yet very effective MIP (mixed integer programming) heuristic using the extended formulation.
    \rafaelC{Besides, we introduce two new benchmark sets of instances to assess the performance of the approaches under different cost structures.}
    \rafaelC{Computational results show that the preprocessing approach can significantly reduce the size of the formulation to be solved, allowing both an increase in the number of instances solved to optimality within the time limit and a reduction on the average time to solve them.
    Moreover, the described inequalities can improve the performance of a standard formulation for nearly all instance groups. They can also be used to provide strong lower bounds for certain large instances for which the preprocessed facility location formulation fails even to provide a linear relaxation bound due to memory limitations.
    Furthermore, the proposed MIP heuristic outperforms the heuristics available in the literature as it obtains solution values which at least match those reported for all instance groups, strictly improving most of them.
    The results also show that the performance of the approaches can vary considerably under different cost structures.
    }
    \newline 

\noindent {\bf Keywords:} inventory lot-sizing; supplier selection; mixed integer programming; MIP heuristics. 
\end{abstract}

\section{Introduction}

In \rafaelC{the} nowadays competitive business environment, it has become more important to \rafaelC{achieve excellence in} supplier selection and lot-sizing processes for purchasing the products required by the companies.
\rafaelC{As suppliers may have varying availability for the needed products at different costs, the} intention of these processes is to choose the best suppliers from which to purchase the items, the amount of the lots, and the time to set the orders in a finite planning horizon. 
\rafaelC{
A company may have the single objective of minimizing the total procurement cost or, in certain situations, multiple objectives which may include costs, quality of items, among other metrics.
The multi-item inventory lot-sizing problem with supplier selection (MIILSPSS) is a problem belonging to this context.} 
\rafaelC{In the MIILSPSS, there exists a known dynamic deterministic demand for multiple items in a finite planning horizon, which can be purchased from a set of suppliers in each of the periods. There is a fixed supplier ordering cost that is charged for each period an order is put to that supplier, as well as unitary purchasing and holding costs. There are no capacities on the amounts of purchased items. The problem consists of determining a purchasing plan minimizing the total cost.} These sorts of problems regarding supplier selection and lot-sizing are faced by a variety of industries since companies that manufacture or simply distribute products very often need to make these decisions in an optimized fashion.   
\rafaelC{Note that uncapacitated problems occur when items are acquired from very large suppliers and, besides, can arise in solving approaches, such as column generation or Lagrangian relaxation, for even more challenging capacitated problems.}


\citeA{UstDem08} propose an integration of analytic network process (ANP) and achievement scalarizing functions for a multi-objective problem of choosing suppliers and defining purchasing quantities for a single item, taking into consideration tangible-intangible criteria.
\marcioC{\citeA{UstDem08b} integrate ANP and multi-objective mixed integer linear programming (MOMILP) for single-item lot-sizing problems with supplier selection.}
\citeA{ZhaKla12} consider a single-item lot-sizing problem with simultaneous supplier selection and provide a mixed integer programming (MIP) formulation for the problem together with a study of its underlying polytope. They provide necessary and sufficient conditions to obtain facet defining inequalities for the uncapacitated case and valid inequalities for the capacitated one.
\citeA{ChoSha13} propose a MIP formulation for a single-item lot-sizing and distribution problem with supplier selection. 
\citeA{ChoSha14} extend the problem studied in \citeA{ChoSha13} to a multi-objective setting and propose a goal programming approach.
\marcioC{\citeA{ArsRicGua16} study the polyhedron associated with a formulation for a two-echelon single-item lot-sizing problem with supplier selection which allows inventory at the suppliers. The authors show that the problem is NP-hard, provide a class of facet defining inequalities, and give the convex hull of the proposed formulation for certain special cases.} \citeA{GhaMaz17} consider a single-item lot-sizing problem with supplier selection, backlogging, and quantity discounts. The authors present a MIP formulation together with a recursive approach that can be used to solve the problem \rafaelC{iteratively}.
\citeA{AkbRap18} consider a single-item uncapacitated lot sizing problem with multi-mode replenishment and batch deliveries. The authors present an NP-hardness proof and show that the problem remains NP-hard even for very simple and strict cost structures. Additionally, they present a 2-approximation algorithm and show that the problem admits a fully polynomial-time approximation scheme (FPTAS).
\rafaelC{We remark that lot-sizing problems with subcontracting~\cite{AtaHoc01} are somehow related to lot-sizing problems with supplier selection, although subcontracting is more commonly used for capacitated problems whenever there is a lack of production capacity.}


\citeA{KasLee96} propose a MIP formulation for a multi-item supplier selection problem with lead times.
\citeA{Dah03} considers a multi-objective supplier selection and order quantities for a multi-item problem with quantity discounts and propose a preference-based approach.
\citeA{BasLeu05} studied the multi-item inventory lot-sizing problem with supplier selection (MIILSPSS), which is the problem considered in our work.
The authors proposed an exhaustive enumerative search and a heuristic based on the Wagner-Whitin algorithm which consists of \rafaelC{construction and improvement phases}.
\rafaelC{\citeA{WadRav07} compare the use of three multi-objective approaches for a multi-objective supplier selection problem for multiple items considering objectives as price, lead-time, and quality.}
\marcioC{
\citeA{Rezaei08.2} propose a genetic algorithm for a multi-item inventory lot-sizing problem with supplier selection in which the items provided by the supplier may have imperfections.}
\marcioC{\citeA{SadAfsSoh08} propose a hybrid intelligent algorithm which combines a fuzzy neural network with a genetic algorithm to plan and control the inventory at different levels depending on demand rates for a multi-item inventory lot-sizing problem with supplier selection.}
\marcioC{\citeA{woarawichai11} propose a MIP formulation for a multi-item inventory lot-sizing problem with supplier selection under storage and budget constraints.}
\marcioC{\citeA{Rezaei11} consider capacitated multi-objective multi-item lot-sizing problems with multiple suppliers. The problems are modeled as multi-objective mixed integer nonlinear programs (MOMINLP) and tackled with multi-objective genetic algorithms to attempt encountering Pareto-optimal solutions. The authors observed that the problems are NP-hard as they extend the capacitated lot-sizing problem~\cite{BitYan82}.}
\citeA{WarSinBan14} formulate and solve a mixed integer nonlinear programming model to manage \rafaelC{a dynamic supplier selection problem which takes into consideration supplier capacity, product quality, and lead times.}
\citeA{CarGonTre15} have revisited and tackled the MIILSPSS~\cite{BasLeu05} with a heuristic based on the reduce and optimize approach (ROA). The authors have demonstrated through computational experiments that their heuristic obtains better solutions when compared with the methods of \citeA{BasLeu05}.
\marcioC{
\citeA{Alfares18} propose a MIP formulation and heuristic algorithms for the multi-item inventory lot-sizing problem with supplier selection with the possibility of shortages.}
\citeA{CunSanMorBar18} consider the integration of a multi-item lot-sizing problem with supplier selection for raw material purchasing in the chemical industry. The authors propose a MIP formulation \rafaelC{that} is solved using a commercial solver and compare the advantages of such \rafaelC{an} integrated approach over a non-integrated one in which the two problems are solved sequentially.
\marcioC{\citeA{KirMei19} consider the application of a lot-sizing problem with supplier selection in the process industry and propose a kernel search heuristic to tackle the real instances of their case study.}

\rafaelC{A summary of the main characteristics and solution approaches for related problems is provided in Table~\ref{summary:articles}.}
The interested reader is referred to \citeA{AisHaoHas07}, \citeA{HoXuDey10} and \citeA{WarSinBan12} for surveys regarding different aspects of supplier selection and order lot-sizing.

\rafaelC{
Extended formulations such as facility location~\cite{KraBil77}, multi-commodity~\cite{RarCho79} and shortest path~\cite{EppMar87} have been extensively used to tackle lot-sizing problems~\cite{SolSur11,AkaMil12,CarNas16,CunMel16b,MelRib17}. Such formulations, however, can become computationally intractable as the problem size increases. Several authors have analyzed the projections of these extended formulations for lot-sizing problems.
\citeA{RarWol93} analyzed the projection of the multicommodity formulation for uncapacitated fixed charge network problems into the original space of variables and showed that such projection is given by the so-called dicut inequalities. \citeA{AkaMil09} and \citeA{AkaMil12} discuss the relationship between valid inequalities and extended formulations for multi-level lot-sizing problems.
\citeA{ZhaKucYam12} performed a polyhedral study for a multi-level lot-sizing with intermediate demands and showed that their described inequalities give the nondominated inequalities implied by the projection of the multicommodity formulation for the two-level problem.
\citeA{MelRib17} performed theoretical and computational comparisons of reformulations and valid inequalities for a multi-item uncapacitated lot-sizing with inventory bounds.
In an attempt to overcome the issue regarding the prohibitive size of extended formulations for large problems, \citeA{VanWol06} introduced the concept of approximate extended formulations, in which a parameter is used to control their sizes. 
}



\subsection{\rafaelC{Main contributions and organization}}

\rafaelC{
The main contributions of our work can be summarized as follows. We firstly show that the multi-item inventory lot-sizing problem with supplier selection is NP-hard.
Secondly, we propose a facility location extended formulation together with an effective preprocessing scheme~\cite{EenBie05,Sav94} and new valid inequalities in the original space of variables.
Differently from approximate extended formulations~\cite{VanWol06}, our preprocessing approach attempts to reduce the size of the formulation while guaranteeing the same quality in the provided lower bounds.
Thirdly, we consider the projection of the facility location extended formulation into the original space. Fourthly, given that MIP heuristics have been successfully applied for several production planning and lot-sizing problems \cite{AkaMil09,HelSah10,MelWol12,MelRib17,CunKraMel19}, we propose a simple and easy to implement yet very effective MIP heuristic.
Last but not least, we introduce two new sets of benchmark instances to analyze the performance of the proposed approaches under alternative cost structures. 
}

\begin{landscape}
\begin{table}[H]

\small
\color{black}
    \centering
    \begin{tabular}{l| p{0.8cm}|p{2.2cm}|p{6.3cm}|p{6.3cm}} \hline
         & Multi-item & Mono or multi-objective & Other characteristics & Solution approach \\ \hline
         \citeA{KasLee96} & Yes & Mono-objective & Stochastic demands and items lost & Chance-constrained integer programming \\[1.5pt]
         \citeA{Dah03} & Yes & Multi-objective & Items can be rejected, missing items and price discounts & MIP formulation with preference oriented approach \\[1.5pt]
         \citeA{BasLeu05} & Yes &  Mono-objective & - & Enumerative algorithm and heuristics \\[1.5pt]
         \citeA{WadRav07} & Yes &  Multi-objective & Capacity constraints & Goal programming \\[1.5pt]
         \citeA{Rezaei08.2} & Yes &  Multi-objective & Transportation costs and product quality levels & Genetic algorithms \\[1.5pt]
         \citeA{SadAfsSoh08} & No &  Mono-objective & Real case study with uncertain demands & Data analysis using neural networks for prediction and genetic algorithms \\[1.5pt]
         \citeA{UstDem08b} & No &  Multi-objective & Defect rate analysis and defective products & Analytic network process integrated with MIP formulation \\[1.5pt]
         \citeA{UstDem08} & No &  Multi-objective & Defective products & Goal programming\\[1.5pt]
         \citeA{Rezaei11} & Yes &  Multi-objective & Back-order and supplier quality level & Nonlinear MIP formulation \\[1.5pt]
         \citeA{woarawichai11} & Yes &  Mono-objective & Storage capacity and purchase budget & MIP formulation \\[1.5pt]
         \citeA{ZhaKla12} & No &  Mono-objective & \marcioC{Uncapacitated} and capacitated cases & Theoretical polyhedral study \\[1.5pt]
         \citeA{ChoSha13} & No &  Mono-objective & Service levels requirements and cost of rejected items & MIP formulation \\[1.5pt]
         \citeA{ChoSha14} & No &  Multi-objective & Storage space constrains and carrier selection & Preemptive goal programming \\[1.5pt]
         \citeA{WarSinBan14} & Yes &  Mono-objective & Supplier capacity, product quality, and lead times  & Nonlinear MIP formulation\\[1.5pt]
         \citeA{CarGonTre15} & Yes &  Mono-objective & - &  Reduce and optimize approach (ROA)  \\[1.5pt]
         \citeA{ArsRicGua16} & No & Mono-objective & Two-echelon, suppliers can hold inventory, supplier controls shipments & Dynamic programming, branch and cut algorithm, and polyhedral study\\[1.5pt]
         \citeA{GhaMaz17} & No &  Mono-objective & Backlogging, quantity discounts & MIP formulation and forward dynamic programming models \\[1.5pt]
         \citeA{Alfares18} & Yes &  Mono-objective & Backlogging, quantity discounts, lead times & MIP formulation, modified Silver-Meal heuristic and genetic algorithm \\[1.5pt]
         \citeA{AkbRap18} & No &  Mono-objective & Batch deliveries and stepwise replenishment costs & Constructive 2-approximation polynomial algorithm and fully polynomial-time approximation scheme (FPTAS) \\[1.5pt]
         \citeA{CunSanMorBar18} & No &  Mono-objective & Integrated raw material purchasing and production planning, production and inventory capacities, lead times, discount levels & MIP formulation \\[1.5pt]
         \citeA{KirMei19} & No &  Mono-objective & Storage selection and discounts & MIP formulation and adapted kernel heuristic \\
         \hline
    \end{tabular}
    \caption{Summary of the literature on lot-sizing with supplier selection.}
    \label{summary:articles}
\end{table}
\end{landscape}

The remainder of this paper is organized as follows. Section~\ref{sec:problemdefinition} formally defines the multi-item inventory lot-sizing problem with supplier selection and shows that the problem is NP-hard.
Section~\ref{sec:proposedapproaches} presents the facility location extended formulation together with the preprocessing scheme and describes the proposed $(l,S_j)$-inequalities. 
Section~\ref{sec:projection} analyzes the projection of the extended formulation into the original space and shows how it relates to the $(l,S_j)$-inequalities. 
Section~\ref{sec:mipheuristic} details the proposed MIP heuristic.
Section~\ref{sec:computationalexperiments} summarizes the performed computational experiments.
Section~\ref{sec:conclusions} discusses final comments.

\section{Problem definition and standard mixed integer programming formulation}
\label{sec:problemdefinition}

In this section, we formally introduce the multi-item inventory lot-sizing problem with supplier selection (MIILSPSS) and describe a standard mixed integer programming formulation for the problem. After that, in Subsection~\ref{sec:nphardness}, we show that the problem is NP-hard.

The MIILSPSS can be formally defined as follows. Consider $I=\{1,\ldots,NI\}$ to be the set of items, $J=\{1,\ldots,NJ\}$ to be the set of suppliers and $T=\{1,\ldots,NT\}$ to be the set of periods composing the planning horizon. A deterministic \rafaelC{time-varying} demand $d^i_t \geq 0$ must be met without backlogging for each item $i\in I$ in each period $t\in T$.
There is a unitary purchasing price $P_{ij}$ of item $i \in I$ from supplier $j\in J$. A transaction cost $O_j$ for supplier $j\in J$ is incurred whenever \rafaelC{an} item is purchased from $j$ in a given period. Furthermore, a \rafaelC{per-unit} holding cost $H_i$ is incurred for item $i\in I$ in every period the item is held in stock. The problem consists \rafaelC{of} determining a purchasing plan which minimizes the total cost. Let $d^i_{kt}=\sum_{l=k}^t d^i_l$ be the \rafaelC{cumulative} demand for item $i\in I$ in periods from $k\in T$ up to $t\in T$, with $k\leq t$. It is assumed that all the costs are nonnegative and that there are no initial or final stocks.  \rafaelC{Table~\ref{tab:summarynotation} summarizes the main notation used throughout the paper.}

Consider variable $x^{ij}_t$ to be the amount of item $i \in I$ purchased from supplier $j \in J$ in period $t \in T$, and variable $y^j_t$ to be equal to one if items are purchased from supplier $j \in J$ in period $t \in T$ and to be equal to zero otherwise. The problem can thus be formulated as \cite{BasLeu05}:

\begin{align}   z_{STD} =& \min \ \sum_{i=1}^{NI} \sum_{j=1}^{NJ} \sum_{t=1}^{NT} P_{ij} x^{ij}_{t}  + \sum_{j=1}^{NJ} \sum_{t=1}^{NT} O_j y^j_t +  \sum_{i=1}^{NI} \sum_{t=1}^{NT} H_i \left( \sum_{j=1}^{NJ} \sum_{k=1}^t x^{ij}_k - d^i_{1t} \right)  \label{STDobj} \\
& \sum_{j=1}^{NJ} \sum_{k=1}^t x^{ij}_k \geq d^i_{1t}, \qquad \textrm{for } i \in I, \ t \in T, \label{STD1}\\
& x^{ij}_t  \leq M y^j_t, \qquad \textrm{for }  i \in I,\ j\in J,\ t \in T, \label{STD2}\\
& y^j_{t} \in \{0,1\}, \qquad \textrm{for } j \in J, \ t \in T, \label{STD3}\\
& x^{ij}_{t} \geq 0, \qquad \textrm{for } i \in I,\ j \in J, \ t \in T.  \label{STD4}
\end{align}
The objective function (\ref{STDobj}) minimizes the total sum of purchasing, transaction, and storage costs. 
Constraints (\ref{STD1}) guarantee that all the demands are satisfied. Constraints (\ref{STD2}) ensure the setup variables are set to one whenever items are purchased from a supplier in a given period. Constraints (\ref{STD3}) and (\ref{STD4}) impose, respectively, the integrality and nonnegativity requirements on the variables. This formulation has $O(NI \times NJ \times NT)$ variables and constraints. \rafaelC{Note that even though the studied problem considers \rafaelC{time-independent} costs, this standard formulation as well as the results proposed in the remainder of the paper can be easily extended for the variant of the problem with \rafaelC{time-dependent} costs.}

\begin{table}[H]
    \centering
    \small
    \color{black}
    \begin{tabular}{cp{11 cm}} 
        \hline
	\textbf{Notation} & \textbf{Description} \\
	\hline
 \rule{0pt}{3ex}        $I$ & Set of items \\
        $NI$ & Number of items, i.e., $|I|$ \\
        $J$ & Set of suppliers\\
        $NJ$ & Number of suppliers, i.e., $|J|$\\
        $T$ & Planning horizon, given by a set of periods \\
        $NT$ & Size of the planning horizon, i.e., $|T|$\\
        $d_{t}^{i}$ & Demand for item $i$ in time $t$\\
        $P_{ij}$ & Per unit price of purchasing item $i$ from supplier $j$\\
        $O_{j}$ & Transaction cost for supplier $j$ \\
        $H_i$ & Per unit holding cost of item $i$  \\
        $d_{kt}^{i}$ & Cumulative demand for item $i$ in the interval of periods $[k,t]$, i.e., $\sum_{k'=k}^t d_{k'}^{i}$  \\ \hline
 \rule{0pt}{3ex}       $F$ & Potential facility locations \\
        $NF$ & Number of potential facility locations, i.e., $|F|$\\
        $C$ & Set of clients \\
        $NC$ & Number of clients, i.e., $|C|$\\
        $q_f$ & Fixed cost to open facility $f$ \\
        $v_{cf}$ & Cost of serving client $c$ from facility $f$ \\ \hline
 \rule{0pt}{3ex}       $x_{t}^{ij}$ & Continuous variable representing the amount of item $i$ purchased from supplier $j$ in period $t$ \\
        $y^{j}_{t}$ & Binary variable representing whether or not items are purchased from supplier $j$ in period $t$ \\
        $X_{tk}^{ij}$ & Facility location variable representing the amount of item $i$ purchased from supplier $j$ in period $t$ to satisfy demand of period $k$ \\ \hline
 \rule{0pt}{3ex}       $\theta_{t}^{ij}$ & Dual variable associated with constraints \eqref{FLS1} \\
        $\gamma_{tk}^{ij}$ & Dual variable associated with constraints \eqref{FLS2}  \\
        $\phi_{t}^{i}$ & Dual variable associated with constraints \eqref{FLS3}\\
        \hline
    \end{tabular}
    \caption{Summary of the used notation.}
    \label{tab:summarynotation}
\end{table}

\subsection{NP-hardness}
\label{sec:nphardness}

\rafaelC{Certain uncapacitated production planning problems are known to be solved in polynomial time~\cite{WagWhi58,Zan69,MelWol10,ZhaKla12}, while others were shown to be NP-hard. Examples of NP-hard problems include the joint-replenishment problem, the one-warehouse multi-retailer problem~\cite{ArkJonRou89, CunMel16}, and variants of the uncapacitated multi-plant lot-sizing with inter-plant transfers~\cite{CunMelKra20}. In this regard, to} the best of our knowledge, there is no available NP-hardness proof for the multi-item inventory lot-sizing problem with supplier selection. In this section, we show that the problem is NP-hard via a reduction from the \rafaelC{NP-hard} uncapacitated facility location problem\rafaelC{~\cite{CorNemWol90}}. 

The uncapacitated facility location problem (UFL) can be formally defined as follows. Consider a set $F=\{1,\ldots,NF\}$ of potential facility locations, a set $C=\{1,\ldots,NC\}$ of clients, a fixed cost $q_f$ to open facility $f\in F$, and a cost $v_{cf}$ of serving client $c \in C$ from facility $f \in F$. The problem consists \rafaelC{of} obtaining a subset $F'\subseteq F$ of the facilities to be opened and then to assign clients to these facilities while minimizing the total cost. The decision version of the problem asks whether there is a solution with \rafaelC{a} cost less than or equal to a value $K$ which is given as input.

\begin{theorem}
The multi-item inventory lot-sizing problem with supplier selection is NP-hard.
\end{theorem}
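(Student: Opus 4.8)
The plan is to reduce the NP-hard uncapacitated facility location problem (UFL) to a special case of the MIILSPSS, exploiting the structural similarity between opening facilities and activating suppliers. Given an arbitrary UFL instance with facilities $F$, clients $C$, opening costs $q_f$, and service costs $v_{cf}$, I would build a MIILSPSS instance with a single period, i.e.\ $NT=1$, so that $T=\{1\}$. I set the suppliers to be the facilities ($J=F$, hence $NJ=NF$) and the items to be the clients ($I=C$, hence $NI=NC$). The transaction cost of each supplier is the opening cost of the corresponding facility, $O_j=q_j$; the unitary purchasing price is the service cost, $P_{ij}=v_{ij}$; each item has a unit demand $d^i_1=1$; and the holding costs $H_i$ are irrelevant and may be set to zero, since with a single period nothing is ever held in stock. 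Taking the big-$M$ constant as $M=NI$, an upper bound on the total demand, makes constraints~\eqref{STD2} valid. This transformation is clearly polynomial.

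The second step is to establish that the two instances share the same optimal value. The key observation is that, for any fixed assignment of the binary variables $y^j_1$ (equivalently, any fixed set of open suppliers), the objective decomposes over the items: since the purchasing cost $\sum_{j} P_{ij} x^{ij}_1$ is linear in $x$ and the unit demand of item $i$ must be met, an optimal solution purchases the full demand of each item from a single cheapest open supplier, incurring cost $\min_{j:\,y^j_1=1} P_{ij}$. Consequently, the optimal MIILSPSS cost for a given open set equals $\sum_{j:\,y^j_1=1} q_j + \sum_{i} \min_{j:\,y^j_1=1} v_{ij}$, which is exactly the UFL cost of opening that same set of facilities and assigning each client to its cheapest open facility. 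Minimizing over all open sets shows that the optimal values coincide; equivalently, there is a MIILSPSS solution of cost at most $K$ if and only if there is a UFL solution of cost at most $K$.

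The only point requiring care is the gap between the continuous purchasing variables $x^{ij}_t$ of the MIILSPSS and the discrete client-to-facility assignment of the UFL. I would argue, as sketched above, that the linearity of the objective together with the nonnegativity of all costs guarantees the existence of an optimal solution in which demand is met exactly and each item is served entirely by one supplier, so that fractional splitting can never do strictly better and the correspondence with UFL assignments is faithful. With this argument in place, the polynomiality of the reduction together with the NP-hardness of UFL~\cite{CorNemWol90} yields the NP-hardness of the MIILSPSS, completing the proof.
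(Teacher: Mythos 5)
Your reduction is correct and is essentially identical to the paper's own proof: both map UFL to a single-period MIILSPSS instance with suppliers as facilities, items as clients, $O_j=q_f$, $P_{ij}=v_{cf}$, and unit demands, and both dispose of fractional splitting by noting that cost nonnegativity and linearity guarantee an optimal solution serving each item integrally from one supplier (the paper phrases this as assuming $\hat{x}$ integral without loss of optimality). Your explicit treatment of the big-$M$ choice and the per-item decomposition over a fixed open set is a slightly more detailed write-up of the same argument, not a different route.
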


\begin{proof}
\rafaelC{In what follows, we present a polynomial transformation UFL$\propto$MIILSPSS. Namely, we} show how an instance for the decision version of the MIILSPSS is obtained from an instance of the decision version of the UFL. \rafaelC{As the decision version of UFL is NP-complete, this implies that the decision version of MIILSPSS is also NP-complete and, thus, its optimization version is NP-hard. Given an instance for UFL, a corresponding instance for MIILSPSS can be obtained as follows.} 
Create a supplier for each potential facility $f \in F$, an item for each client $c\in C$, and set the number of periods as $NT = 1$. For each supplier $f\in J$, set its transaction cost as the cost of opening the corresponding facility, i.e., $O_f = q_f$. The cost of acquiring item $c \in I$ from supplier $f\in J$ is set as the cost of serving client $c\in C$ from facility $f\in F$, i.e., $ P_{cf} = v_{cf}$. 
We now show that the instance for UFL has a solution with value less than or equal to $K$ if and only if the corresponding instance for MIILSPSS has a solution with value less than or equal to $K$. 
Consider a solution $F' \subseteq F$ for the uncapacitated facility location in which each facility $f\in F'$ serves a set $C_f$ of clients, with cost $K=\sum_{f \in F'} q_f + \sum_{f \in F'}\sum_{c \in C_f} v_{cf}$. Thus, there is a solution for the multi-item inventory lot-sizing problem with supplier selection with only nonzero values $y^{f}_1=1$ for $f\in F'$ and $x^{cj}_1=1$ for $j\in F'$ and $c \in C_j$, whose objective is $\sum_{f \in F'}\sum_{c \in C_f} v_{cf} x^{cf}_1 + \sum_{f \in F'} q_f y^f_1 = K$ . 
Now consider a solution $(\hat{x},\hat{y})$ for the multi-item lot-sizing problem with supplier selection with cost $K=\sum_{i \in NI}\sum_{j \in NJ} P_{ij} x^{ij}_1 + \sum_{j \in NJ} O_{j} y^{j}_1$, and assume that $\hat{x}$ is integral (note that such integral solution always exist when we consider solutions with the lowest possible cost). Observe that there is a corresponding solution $F' = \{f \in J \ | \ y^f_1 = 1 \}$ and $C_f = \{ c \in C \ | \ x^{cf}_1 = 1 \}$ for the uncapacitated facility location with cost $\sum_{j \in F'} O_{j} +  \sum_{j \in F'}\sum_{i \in C_j} P_{ij} = K$.
\end{proof}

\section{Extended formulation and valid inequalities}
\label{sec:proposedapproaches}

In this section we present the extended formulation and valid inequalities proposed in this paper. Subsection~\ref{sec:facilitylocation} describes the facility location extended formulation \rafaelC{and the preprocessing approach}. Subsection~\ref{sec:validinequalities} introduces the new $(l,S_j)$-inequalities.

\subsection{Facility location extended formulation}
\label{sec:facilitylocation}

Define variable $X^{ij}_{tk}$ to be the amount of item $i \in I$ purchased from supplier $j \in J$ in period $t\in T$ to satisfy demand of period $k \in T$, with $t\leq k$. A facility location formulation~\cite{KraBil77} can be cast as
\begin{align}   
z_{FL} = & \min \ \sum_{i=1}^{NI} \sum_{j=1}^{NJ} \sum_{t=1}^{NT} \sum_{k=t}^{NT} P_{ij} X^{ij}_{tk}  + \sum_{j=1}^{NJ} \sum_{t=1}^{NT} O_j y^j_t +  \sum_{i=1}^{NI} \sum_{t=1}^{NT} H_i (\sum_{u=1}^t \sum_{k=t+1}^{NT} X^{ij}_{uk})  \label{FLobj} \\
& \sum_{j=1}^{NJ}\sum_{t=1}^{k}X^{ij}_{tk} = d^i_k, \qquad \textrm{for } i \in I, \ k \in T, \label{FL1}\\
& X^{ij}_{tk}  \leq d^i_k y^j_t, \qquad \textrm{for }  i \in I,\ j\in J,\ t \in T,\ k \in \{t,\ldots,NT\}, \label{FL2}\\
& y^j_{t} \in \{0,1\}, \qquad \textrm{for } j \in J, \ t \in T, \label{FL3}\\
& X^{ij}_{tk} \geq 0, \qquad \textrm{for } i \in I,\ j \in J, \ t \in T, \ k\in \{t,\ldots,NT\}.  \label{FL4}
\end{align}
The objective function (\ref{FLobj}) minimizes the total sum of purchase, transaction, and storage costs. 
Constraints (\ref{FL1}) guarantee that all the demands are satisfied. Constraints (\ref{FL2}) enforce the  setup variables to one whenever items are purchased from a supplier in a given period. Constraints (\ref{FL3}) and (\ref{FL4}) are integrality and nonnegativity restrictions on the variables.

\subsubsection{Preprocessing the facility location extended formulation}\label{sec:preprocess}

The facility location extended formulation has a large number of variables since, in order to properly model the problem, it considers the possibility that the demand of a period is satisfied by the production in any other period sooner in the \rafaelC{planning} horizon. Although it might be the case for certain instances that the productions of the first periods are used to meet the demands \rafaelC{of} the later periods, it does not seem to be the case for real instances. In this context, we show that variables can be eliminated from the formulation based on the cost structure without losing optimality.

\begin{proposition}\label{prop:preprocessing}
    Let $X^{ij}_{tk}$ be a variable for which \marcioC{$O_j \leq (k-t) \times H_i \times d^i_k$}. Thus, we can set $X^{ij}_{tk'}=0$ for every $k'$ such that $k \leq k' \leq NT$ without losing optimality.
\end{proposition}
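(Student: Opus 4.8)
The plan is to prove the statement by a local exchange (interchange) argument: starting from an arbitrary optimal solution of the facility location formulation, I would show that all the flow purchased at period $t$ from supplier $j$ to cover demands at or after period $k$ can be rerouted to later purchases without increasing the objective, so that some optimal solution satisfies $X^{ij}_{tk'}=0$ for every $k'\ge k$. The objective decomposes additively into the per-unit purchasing/holding costs plus the per-supplier setups $O_j y^j_t$, and all costs are time-independent; hence moving one unit of item $i$ of supplier $j$ from being purchased at $t$ to being purchased at a later period $p\le k'$ leaves the purchasing term $P_{ij}$ unchanged and decreases the holding contribution by exactly $(p-t)H_i$, while the only term that can increase is the setup, and by at most a single $O_j$.

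First I would dispose of the degenerate cases: if $H_i=0$ or $d^i_k=0$, then the hypothesis and nonnegativity of the costs force $O_j=0$, so setups for supplier $j$ are free and each demand may be served from its own period at no extra cost, trivially allowing $X^{ij}_{tk'}=0$. Assuming $H_i>0$, I would invoke the standard single-sourcing (zero-inventory-ordering) structure of uncapacitated lot-sizing to restrict attention to optimal solutions in which each demand $(i,k')$ is met entirely from one (supplier, period) pair, that is $X^{ij}_{tk'}\in\{0,d^i_{k'}\}$; among all such optimal solutions I would select one minimizing the total long-haul volume $\Phi=\sum_{k'\ge k}X^{ij}_{tk'}$. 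The goal then becomes to show $\Phi=0$.

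The heart of the argument is the cost comparison driven by the hypothesis $O_j\le (k-t)H_i d^i_k$. Suppose $\Phi>0$ and let $k'$ be the latest demand period served from $(j,t)$. If supplier $j$ already has an open order at some period in $(t,k']$, I would reroute the affected flow onto that order: this keeps the setup cost fixed and strictly lowers the holding cost, contradicting minimality of $\Phi$ (or optimality). Hence no such order exists, and I would open a fresh order for supplier $j$ at period $k$ and move all the long-haul flow there, paying one extra setup $O_j$ against a holding saving of $(k-t)H_i\Phi$. When demand $k$ is itself served from $(j,t)$ we have $\Phi\ge d^i_k$, so the hypothesis makes the move non-worsening and reduces $\Phi$, a contradiction; when demand $k$ is instead served from another (already open) source $(j^*,p^*)$, I would reroute the later long-haul flow onto $(j^*,p^*)$, and the single-sourcing optimality of both demand $k$ and demand $k'$ forces the two per-unit serving costs to coincide, so this reroute is free and again reduces $\Phi$.

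I expect the main obstacle to be exactly this last accounting: relating the hypothesis, which is phrased through the single demand $d^i_k$, to the actual volume of flow that must be rerouted in order to justify paying one setup $O_j$. Making this rigorous is what forces the reduction to single-sourced solutions — so that serving demand $k$ from $(j,t)$ contributes the full $d^i_k$ to $\Phi$ — together with the equal-cost observation handling the complementary case. The remaining bookkeeping, namely verifying that the facility-location constraints \eqref{FL1}--\eqref{FL2} remain satisfied after each reroute and that the procedure terminates, is routine.
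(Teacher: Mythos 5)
Your argument is correct, and it takes a genuinely different route from the paper's. The paper's proof is a two-step argument: first, for the earliest period $k$ after $t$ satisfying $O_j \leq (k-t) H_i d^i_k$, it performs the single exchange $X^{ij}_{tk}=0$, $y^j_k=1$, $X^{ij}_{kk}=w$, reading the right-hand side of the hypothesis as the holding cost of the rerouted quantity; second, it invokes the consecutiveness property of extreme solutions of fixed-charge networks (an order serves the demands of consecutive periods) to propagate $X^{ij}_{tk}=0$ to all $k'>k$. You replace both steps: instead of consecutiveness you pick an optimal single-sourced solution minimizing the long-haul volume $\Phi$ and zero out all of $X^{ij}_{tk'}$, $k'\geq k$, in one exchange, with the equal-cost swap disposing of the case where demand $k$ is served from another source $(j^*,p^*)$ (your $\Delta=(P_{ij}-P_{ij^*})+(p^*-t)H_i$ is independent of the demand period, so the two one-sided optimality comparisons at demands $k$ and $k'$ do force $\Delta=0$, as you claim). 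Notably, your explicit reduction to single-sourced solutions --- immediate here since for fixed $y$ the demands decouple and the bounds in \eqref{FL2} equal the full demand $d^i_k$ --- is exactly what is needed to make the paper's first exchange airtight: as written, the paper's move saves only $(k-t)H_i w$, which covers $O_j$ only when $w = d^i_k$, a point the paper leaves implicit (it tacitly treats the rerouted quantity as the whole demand of period $k$) and which you correctly identified as the crux of the accounting. The trade-off: the paper's proof is shorter, leaning on the known Wagner--Whitin/zero-inventory-ordering structure, which it needs anyway for its second step; yours is longer because the three-case minimal-$\Phi$ analysis substitutes for consecutiveness, but it is self-contained, handles the degenerate cases ($H_i=0$ or $d^i_k=0$) explicitly, and makes the comparison of $O_j$ against the actual rerouted volume fully rigorous.
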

\begin{proof}
    Firstly, let $k$ be the earliest period after $t$ for which $O_j \leq \rafaelC{(k-t)} \times H_i \times d^i_k$ and assume there is an optimal solution in which $X^{ij}_{tk}=w>0$. Note that the right-hand side of the condition corresponds to the total storage cost for the demand of period $k$ which was purchased in period $t$. Thus, as the purchasing costs are \rafaelC{time-independent}, we can set $X^{ij}_{tk}=0$, $y^{j}_k=1$ and $X^{ij}_{kk}=w$ in order to obtain a solution which is at least as good as the previous one. Secondly, due to the property of extreme feasible solutions in fixed-charge networks, which was also observed in \citeA{BasLeu05}, there exists an optimal solution in which purchasing in a given period satisfies \rafaelC{the} demands of consecutive periods. Therefore, if there is an optimal solution in which $X^{ij}_{tk}=0$, there exists an optimal solution in which $X^{ij}_{tk'}=0$ for every $k < k' \leq NT$. 
\end{proof}

\rafaelC{Observe that the potential elimination of variables can strongly vary with input data and its strength relies on the relationship between transaction costs, holding costs, demands, and the length of the planning horizon. Besides, note} that Proposition~\ref{prop:preprocessing} can be easily adapted for the case in which costs are \rafaelC{time-dependent}.

\subsection{The $(l,S_j)$-inequalities}
\label{sec:validinequalities}

We describe the $(l,S_j)$-inequalities, \rafaelC{which are exponential in number, and} generalize the $(l,S)$-inequalities for the uncapacitated lot-sizing~\cite{BarVanWol84} by considering the different suppliers. \rafaelC{Besides, the inequalities we describe in the following are strongly related to those presented in \citeA{ZhaKla12}, but we remark that the uncapacitated single-item problem in their work is different from our problem with a unique item and the inequalities described therein are thus different from ours.} Define $L = \{1,\ldots,l\}$ with $1 \leq l \leq NT$, and $S_j \subseteq L$ for each $j\in J$.

\begin{theorem} The $(l,S_j)$ inequalities
\begin{equation}\label{ineq:lsj}
     \sum_{j=1}^{NJ} \left( \sum_{u\in L\setminus S_j } x^{ij}_u + \sum_{u\in S_j} y^{j}_u d^i_{ul} \right) \geq d^i_{1l}, \ \ \rafaelC{\textrm{for } i\in I,}
\end{equation}
are valid for the multi-item inventory lot-sizing with supplier selection.
\end{theorem}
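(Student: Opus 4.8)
The plan is to verify validity directly: I would show that every integer-feasible purchasing plan of the MIILSPSS satisfies inequality~\eqref{ineq:lsj} for each item $i \in I$. Since the inequality is stated per item, I would fix an arbitrary item $i\in I$ and an arbitrary feasible solution $(x,y)$, and argue that the left-hand side is bounded below by $d^i_{1l}$.

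The central idea is a flow-decomposition (or \emph{useful amount}) argument, adapting the classical proof of the $(l,S)$-inequalities to the presence of multiple suppliers. Because there are no initial stocks and no backlogging, the entire demand $d^i_{1l}$ of the first $l$ periods must be met by purchases made in periods $1,\ldots,l$, and a purchase made in period $u$ can serve demand only in periods $u,\ldots,NT$. I would therefore introduce, for each supplier $j\in J$ and each period $u\le l$, the quantity $r^{ij}_u$ equal to the portion of $x^{ij}_u$ actually consumed to satisfy the demand of periods $u,\ldots,l$. By construction this satisfies three properties: (i) $r^{ij}_u \le x^{ij}_u$; (ii) $r^{ij}_u \le d^i_{ul}$, since the demand available in the window $[u,l]$ totals $d^i_{ul}$; and (iii) $\sum_{j=1}^{NJ}\sum_{u=1}^{l} r^{ij}_u = d^i_{1l}$, because the whole demand of the first $l$ periods is covered exactly by such purchases.

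The key step is to bound each term of the left-hand side from below by the corresponding $r^{ij}_u$, distinguishing the two blocks of the sum. For $u \in L\setminus S_j$ the bound $x^{ij}_u \ge r^{ij}_u$ is immediate from (i). For $u \in S_j$ I would use the linking constraints~\eqref{STD2} together with the integrality of $y$: if $r^{ij}_u>0$ then $x^{ij}_u>0$, which forces $y^j_u=1$ and hence $y^j_u d^i_{ul} = d^i_{ul} \ge r^{ij}_u$ by (ii); and if $r^{ij}_u=0$ the bound $y^j_u d^i_{ul} \ge 0 = r^{ij}_u$ holds trivially. Summing these bounds over all $j\in J$ and all $u\le l$ and invoking (iii) shows that the left-hand side is at least $\sum_{j=1}^{NJ}\sum_{u=1}^{l} r^{ij}_u = d^i_{1l}$, which is exactly~\eqref{ineq:lsj}.

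I expect the main subtlety, rather than an outright obstacle, to be the rigorous handling of the multi-supplier structure: the set $S_j$ depends on $j$, so a single period $u$ may contribute through its $x$-term for one supplier and through its $y$-term for another. A direct case split on whether demand is covered inside or outside $S$ becomes awkward here, which is precisely why I favor the per-$(i,j,u)$ flow decomposition: it lets each purchase be accounted for independently and sidesteps any global reasoning about which supplier serves which unit of demand. The only facts I need to make fully precise are the existence and the stated properties of the amounts $r^{ij}_u$, which all follow from the zero-initial-stock, no-backlogging feasibility of the plan.
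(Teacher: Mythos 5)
Your proof is correct, and it takes a genuinely different route from the paper's. The paper argues by a global case split on the setup variables restricted to the sets $S_j$: either $\hat{y}^j_k=0$ for every $j\in J$ and $k\in S_j$, in which case \eqref{STD2} forces $\hat{x}^{ij}_k=0$ on those index pairs and \eqref{STD1} with $t=l$ already yields $\sum_{j}\sum_{u\in L\setminus S_j}\hat{x}^{ij}_u\geq d^i_{1l}$; or there is an earliest period $k'$ (with corresponding supplier $j'$) such that $\hat{y}^{j'}_{k'}=1$ and $k'\in S_{j'}$, and then \eqref{STD1} at $t=k'-1$ covers $d^i_{1,k'-1}$ by $x$-terms that, by minimality of $k'$, all lie in $L\setminus S_j$, while the single term $\hat{y}^{j'}_{k'}d^i_{k'l}=d^i_{k'l}$ covers the remainder. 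You instead charge every unit of the demand $d^i_{1l}$ to the purchase serving it and bound each triple $(i,j,u)$ separately: $x^{ij}_u\geq r^{ij}_u$ for $u\in L\setminus S_j$, and $y^j_u d^i_{ul}\geq r^{ij}_u$ for $u\in S_j$ via integrality and \eqref{STD2}. What each buys: the paper's argument is shorter and needs no auxiliary construction, precisely because the earliest-open-setup trick renders the $j$-dependence of the sets $S_j$ harmless; your per-term argument is more local and more informative, since your $r^{ij}_u$ is exactly $\sum_{k=u}^{l}X^{ij}_{uk}$ in the facility location formulation of Section~\ref{sec:facilitylocation}, so it anticipates the projection results of Section~\ref{sec:projection} (Theorem~\ref{theo:obtainlsj} and Corollary~\ref{corollary:equalbounds}); indeed, replacing your integrality step by constraint \eqref{FL2} would show the inequalities hold already for the linear relaxation of the extended formulation, which is strictly more than the paper's proof establishes. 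The one point you must still make rigorous is the existence of the amounts $r^{ij}_u$: since constraints \eqref{STD1} are inequalities, purchases may exceed demand, so the allocation should be constructed (e.g., greedily, first-in-first-out in increasing demand period) so that each demand $d^i_k$ with $k\leq l$ is covered exactly by purchases made in periods $u\leq k$ and any surplus purchase quantity is left unallocated; the cumulative constraints \eqref{STD1} guarantee this greedy allocation never runs short, and your properties (i)--(iii) then follow as stated.
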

\begin{proof}
    Let $(\hat{x},\hat{y})$ be a feasible solution for (\ref{STD1})-(\ref{STD4}). Firstly, consider the case in which $\hat{y}^j_k=0$ for every $j\in J$ and $k \in S_j$. This implies that $\hat{x}^{ij}_k=0$ for every $j\in J$ and $k \in S_j$, and thus constraints (\ref{STD1}) ensure that $\sum_{j=1}^{NJ} \sum_{u\in L\setminus S_j } \hat{x}^{ij}_u  \geq d^i_{1l}$. Now, assume that $\hat{y}^j_k=1$ for at least one $j\in J$ and $k \in S_j$, and let $k'$ be the earliest period in which this happens and $j'$ be the corresponding supplier. 
    As constraints (\ref{STD1}) ensure that $\sum_{j=1}^{NJ} \sum_{u\in \{1,\ldots,k'-1\} } \hat{x}^{ij}_u \geq d^i_{1,k'-1}$, the fact that $\hat{y}^{j'}_{k'}=1$ implies that $\sum_{j=1}^{NJ} \sum_{u\in \{1,\ldots,k'-1\} } \hat{x}^{ij}_u + \hat{y}^{j'}_{k'} d^i_{k',l}  \geq d^i_{1l}$. Therefore, the inequality is valid. 
\end{proof}


\subsubsection{Separation of the $(l,S_j)$-inequalities}
\label{sec:separation}

\rafaelC{As there is an exponential number of $(l,S_j)$-inequalities, we consider the separation problem for such inequalities. Given a} fractional solution $(\bar{x},\bar{y})$ for the linear relaxation of (\ref{STD1})-(\ref{STD4}) we want to find a most violated $(l,S_j)$-inequality (\ref{ineq:lsj}). 

These inequalities can be separated similarly to the $(l,S)$-inequalities for the uncapacitated lot-sizing. For every possible item $i \in I$ and time period $l \in T$ we can find a most violated $(l,S_j)$-inequality (\ref{ineq:lsj}) \rafaelC{using inspection} by simply computing $\sum_{j=1}^{NJ}\sum_{k=1}^l \min \{ \bar{x}^{ij}_k , d^{i}_{kl} \bar{y}^{j}_{k} \}$ and building $S_{j}$ appropriately according to the choices on the inner minimum in $O(NJ\times NT)$. This gives an $O(NJ\times NT^2)$ algorithm to separate the inequalities for each $i\in I$. In what follows, we present an $O(NJ\times NT\times \log NT)$ dynamic programming separation algorithm to encounter a most violated $(l,S_j)$-inequality for each $i\in I$.

Given an item $i \in I$, define $\alpha^i_l = \sum_{j=1}^{NJ}\sum_{k=1}^l \min \{ \bar{x}^{ij}_k , d^{i}_{kl} \bar{y}^{j}_{k} \}$. Inequality (\ref{ineq:lsj}) is violated for $L=\{1,\ldots, l\}$ whenever $\alpha^i_l < d^i_{1l}$. Note that the nonnegativity of the demands implies $d^i_{kl} \bar{y}^{j}_k \leq d^i_{ku} \bar{y}^{j}_k$  for $k \leq l < u$. For $j\in J$ and $k\in T$, define $l^j_k$ as the first period in which $d^i_{k,l^j_k-1}\bar{y}^{j}_k < \bar{x}^{ij}_k \leq d^i_{kl^j_k}\bar{y}^{j}_k$ and let $Y^j_l = \{k \in L\ | \ l^j_k > l \}$ and $\rafaelC{Z}^j_l = \{ j\in L \ | \ l^j_k = l \}$. Therefore, the value $\alpha_l$ can be determined using the recursion
\begin{equation}\label{eq:alphas}
\alpha^i_l = \alpha^i_{l-1} + d^i_l (\sum_{j\in J}\sum_{k\in Y^j_l} \bar{y}^{j}_k) + \sum_{j\in J}\sum_{k \in \rafaelC{Z}^j_l}(\bar{x}^{ij} - d^i_{k,l-1}\bar{y}^{j}_k),
\end{equation}
with $\alpha^i_0 = 0$ as base case.

Considering the fact that $Y^j_l = Y^j_{l-1} \cup \{l\} \setminus \rafaelC{Z}^j_l$, each period $k$ enters at most once in $Y^j$, leaves $Y^j$ and enters $\rafaelC{Z}^j$ at most once. Thus, all the $\alpha_l$ values can be determined in $O(NJ \times NT)$. Observe also that we can determine $l^j_k$ for each $j\in J$ and $k \in T$ in $O(\log NT)$ using binary search, implying a running time of $O(NJ\times NT\times \log NT)$ for all the calculations.

\section{On the projection of the facility location formulation}
\label{sec:projection}

In this section, we study the projection of the facility location extended formulation (\ref{FL1})-(\ref{FL4}) into the space of the original $(x,y)$ variables. We consider the extended formulation as a separation problem in order to describe the inequalities generated by its projection. After that, we show how they relate with the $(l,S_j)$-inequalities, showing that the linear relaxation of the facility location extended formulation provides the same bound as that of the linear relaxation of the standard formulation together with the $(l,S_j)$-inequalities. 

Given a fractional solution $(\hat{x}, \hat{y})$ feasible for the linear relaxation of (\ref{STD1})-(\ref{STD4}), we wish to find an inequality implied by the facility location extended formulation in the original space cutting off this solution. Consider the formulation
\begin{align}   
z_{FLS} = & \max \ 0  \label{FLSobj} \\
& \sum_{j=1}^{NJ}\sum_{t=1}^{k}X^{ij}_{tk} = d^i_k, \qquad \textrm{for } i \in I, \ k \in T, \label{FLS1}\\
& X^{ij}_{tk}  \leq d^i_k \hat{y}^j_t, \qquad \textrm{for }  i \in I,\ j\in J,\ t \in T,\ k \in \{t,\ldots,NT\}, \label{FLS2}\\
& \sum_{k=t}^{NT} X^{ij}_{tk} \leq \hat{x}^{ij}_t, \qquad \textrm{for } i \in I,\ j \in J, \ t \in T, \label{FLS3}\\
& X^{ij}_{tk} \geq 0, \qquad \textrm{for } i \in I,\ j \in J, \ t \in T, \ k\in \{t,\ldots,NT\}.  \label{FLS4}
\end{align}
The objective function simply maximizes an arbitrary constant.
Constraints (\ref{FLS1}) ensure all the demands are satisfied.
Constraints (\ref{FLS2}) limit the multicommodity purchasing variables considering the values in $\hat{y}$.
Constraints (\ref{FLS3}) link the original facility location variables with the values assumed by the original $\hat{x}$. Note that due to the nonnegativity of all the coefficients in the objective function (\ref{STDobj}), $\sum_{j\in J}\sum_{t \in T} \hat{x}^{ij}_t = d^i_{1,NT}$ and thus (\ref{FLS3}) will hold at equality.  Constraints (\ref{FLS4}) are nonnegativity requirements on the variables.

Define $\phi$ , $\gamma$ and $\theta$ to be the dual variables associated to constraints (\ref{FLS1}), (\ref{FLS2}) and (\ref{FLS3}), respectively. The dual of (\ref{FLSobj})-(\ref{FLS4}) can thus be written as
\begin{align}   
z_{DFLS} = & \min \ \sum_{i=1}^{NI} \sum_{j=1}^{NJ} \sum_{t=1}^{NT} \theta^{ij}_t \hat{x}^{ij}_{t} + \sum_{i=1}^{NI} \sum_{j=1}^{NJ} \sum_{t=1}^{NT} \sum_{k=t}^{NT} \gamma^{ij}_{tk} d^{i}_k \hat{y}^j_{t} + \sum_{i=1}^{NI}\sum_{t=1}^{NT} \phi^i_t d^i_t  \label{FLSDobj} \\
& \theta^{ij}_t + \gamma^{ij}_{tk} + \phi^i_k \geq 0, \qquad \textrm{for } i \in I,\ j \in J, \ t \in T, \ k\in \{t,\ldots,NT\}, \label{FLSD1}\\
& \theta^{ij}_t \geq 0, \qquad \textrm{for } i \in I,\ j \in J, \ t \in T,  \label{FLSD2}\\
& \gamma^{ij}_{tk} \geq 0, \qquad \textrm{for } i \in I,\ j \in J, \ t \in T, \ k\in \{t,\ldots,NT\}.  \label{FLSD3}
\end{align}

Note that variables $\phi$ are the only negative ones in \rafaelC{an} extreme ray (\ref{FLSDobj}) with \rafaelC{a} negative cost. Thus, we normalize the extreme rays by assuming without loss of generality that $\phi^i_t \geq -1$ for $i\in I$ and $t\in T$. We formalize the inequalities obtained via (\ref{FLSDobj})-(\ref{FLSD3}) as
\begin{equation}\label{ineq:projected}
     \sum_{i=1}^{NI} \sum_{j=1}^{NJ} \sum_{t=1}^{NT} \theta^{ij}_t {x}^{ij}_{t} + \sum_{i=1}^{NI} \sum_{j=1}^{NJ} \sum_{t=1}^{NT} \sum_{k=t}^{NT} \gamma^{ij}_{tk} d^{i}_k {y}^j_{t} + \sum_{i=1}^{NI}\sum_{t=1}^{NT} \phi^i_t d^i_t \geq 0.
\end{equation}

In what follows, we want to show that the matrix associated with constraints (\ref{FLSD1}) is totally unimodular, and in order to do so, we use the next two \rafaelC{\textit{very well known}} results.

\begin{theorem}\label{theo:transpose}
A matrix $A$ is TU iff: (a) the transpose matrix $A^T$ is TU iff (b) the matrix $(A,I)$ is TU, where $I$ denotes the identity matrix. \cite{HofKru57}
\end{theorem}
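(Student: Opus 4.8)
The plan is to prove the two equivalences separately, working directly from the definition of total unimodularity: a matrix is TU precisely when every square submatrix has determinant in $\{0,+1,-1\}$. Throughout I would use two elementary facts that require no case analysis, namely that transposition preserves determinants ($\det(M^T)=\det(M)$ for every square $M$) and that the TU property, being a statement about \emph{all} square submatrices, is inherited by every submatrix.

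For equivalence (a), I would observe that the square submatrices of $A^T$ are exactly the transposes of the square submatrices of $A$. Since transposition preserves determinants, the collection of determinant values arising from square submatrices is identical for $A$ and for $A^T$. Hence $A$ is TU if and only if $A^T$ is TU, and this direction is essentially immediate.

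For equivalence (b), the forward implication (if $(A,I)$ is TU then $A$ is TU) follows because $A$ is a column-submatrix of $(A,I)$: any square submatrix of $A$ is also a square submatrix of $(A,I)$, so the TU property descends to $A$. The substantive direction is the converse. Assuming $A$ is TU, I would reduce to appending a single unit column and argue by induction, since $(A,I)$ is obtained from $A$ by adjoining the columns of $I$ one at a time. For the single-column step, let $e_r$ be the appended column and let $B$ be an arbitrary square submatrix of $(A,e_r)$. If $B$ avoids the appended column, it is a square submatrix of $A$ and we are done. Otherwise I would restrict the appended column to the rows selected by $B$: if row $r$ is not selected, the column is the zero vector and $\det B = 0$; if row $r$ is selected, the column is a standard basis vector, and cofactor expansion along it yields $\det B = \pm\det B'$, where $B'$ is obtained by deleting row $r$ together with the appended column. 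Since $B'$ is a square submatrix of $A$, its determinant lies in $\{0,\pm 1\}$, and therefore so does $\det B$. Iterating over the $m$ unit columns of $I$ completes the argument.

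The main obstacle is the converse of (b): one must handle correctly the fact that a selected identity column, once restricted to the rows chosen by $B$, may collapse either to a zero column or to a unit vector. Only the inductive, one-column-at-a-time reduction cleanly controls the sign and guarantees that the residual matrix $B'$ is a genuine square submatrix of $A$, to which the hypothesis applies. Everything else reduces to the determinant-preserving property of transposition and the closure of TU under passing to submatrices.
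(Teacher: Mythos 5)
Your proof is correct, but there is no proof in the paper to compare it against: the paper states this theorem as a known classical result, citing \citeA{HofKru57}, and uses it purely as a tool in the proof of Theorem~\ref{theo:matrixTU}. What you have written is the standard textbook argument from the definition of total unimodularity. Equivalence (a) is exactly as you say: square submatrices of $A^T$ are the transposes of square submatrices of $A$, and $\det(M^T)=\det(M)$. For (b), the easy direction (TU descends to the column-submatrix $A$) and the substantive direction via cofactor expansion along an appended unit column --- with the correct case split between the restricted column collapsing to the zero vector (giving $\det B=0$) and to a standard basis vector (giving $\det B=\pm\det B'$ for a smaller square submatrix $B'$) --- are both handled properly, and your lemma-plus-iteration structure is sound: each application of the one-column lemma is invoked with the current, already-extended matrix playing the role of the TU hypothesis, so $B'$ is indeed a square submatrix of the matrix to which the hypothesis applies. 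Two marginal remarks: the degenerate case $B=(1)$ is covered by the convention that the determinant of the empty matrix is $1$; and the induction is not strictly necessary, since one may expand along all selected identity columns at once, reducing any square submatrix of $(A,I)$ directly to a square submatrix of $A$ up to sign. Neither point affects correctness; your write-up fills a gap the paper deliberately leaves to the cited literature.
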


\begin{theorem}\label{theo:partitionmatrix}
A matrix $A$ is TU if: (a) $a_{ij} \in \{-1,0,+1 \}$ for all $i,j$, and (b) for any subset $M$ of the rows, there exists a partition $(M_1,M_2)$ of $M$ such that each column $j$ satisfies $\left| \sum_{i\in M_1} a_{ij} - \sum_{i\in M_2} a_{ij} \right| \leq 1$. \citeA{Gho62}
\end{theorem}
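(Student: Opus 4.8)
The plan is to establish the sufficient condition of Theorem~\ref{theo:partitionmatrix} directly from the definition of total unimodularity, namely by showing that every square submatrix $B$ of $A$ satisfies $\det B \in \{-1,0,1\}$. I would proceed by induction on the order $k$ of $B$. The base case $k=1$ is immediate from hypothesis (a), since every entry lies in $\{-1,0,1\}$. For the inductive step I assume that every square submatrix of order at most $k-1$ has determinant in $\{-1,0,1\}$ and consider an arbitrary $k\times k$ submatrix $B$; if $\det B=0$ there is nothing to prove, so I may assume $B$ is nonsingular and aim to show $|\det B|=1$.

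The key construction is to pass to the adjugate. Because every cofactor of $B$ is, up to sign, a minor of order $k-1$, the inductive hypothesis gives $\mathrm{adj}(B)=\det(B)\,B^{-1}\in\{-1,0,1\}^{k\times k}$. I would single out the vector $z$ formed by the first row of $\mathrm{adj}(B)$, so that $z\in\{-1,0,1\}^{k}$, $z\neq 0$, and $B^{\top}z=\det(B)\,e_{1}$. Let $R=\{i:z_{i}\neq 0\}$ denote its support, viewed as a subset of the rows of $A$.

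Now I would invoke hypothesis (b) of Theorem~\ref{theo:partitionmatrix} on $R$ to obtain a partition $(R_{1},R_{2})$ for which every signed column sum lies in $\{-1,0,1\}$; encoding this partition as $\tilde z\in\{-1,0,1\}^{k}$ (with $\pm1$ on $R_{1},R_{2}$ and $0$ elsewhere) yields $B^{\top}\tilde z\in\{-1,0,1\}^{k}$ with support exactly $R$. The decisive observation is a parity argument: since $z_{i}\equiv\tilde z_{i}\pmod 2$ for every $i$ (both are odd precisely on $R$ and zero off it), I get $B^{\top}z\equiv B^{\top}\tilde z\pmod 2$ componentwise, whence $B^{\top}\tilde z=c\,e_{1}$ for some $c\in\{-1,0,1\}$ with $c\equiv\det B\pmod 2$. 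Nonsingularity rules out $c=0$ (which would force $\tilde z=0$ despite $R\neq\emptyset$), so $c=\pm1$; then $\tilde z=(c/\det B)\,z$, and comparing the unit-magnitude entries on $R$ forces $|\det B|=1$, completing the induction.

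I expect the main obstacle to be the bookkeeping in the parity step and, relatedly, ensuring that hypothesis (b) is applied to the correct index set: the hypothesis partitions \emph{rows}, so I deliberately work with a row of the adjugate (equivalently, with $B^{\top}$, which is legitimate by the transpose invariance in Theorem~\ref{theo:transpose}) so that the support $R$ is a set of rows to which (b) applies verbatim. The sign conventions in $\mathrm{adj}(B)=\det(B)\,B^{-1}$ and the claim that $\tilde z$ has support exactly $R$ also require care, but these become routine once the parity identity $B^{\top}z\equiv B^{\top}\tilde z\pmod 2$ is in place.
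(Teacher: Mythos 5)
The paper itself offers no proof of this statement: it is quoted verbatim as a \emph{very well known} result, with a citation to \citeA{Gho62}, and is used purely as a black box in the proof of Theorem~\ref{theo:matrixTU} (total unimodularity of the matrix of constraints \eqref{FLSD1}). Your argument is correct and is, in essence, the classical proof of the sufficiency half of Ghouila-Houri's characterization as found in standard combinatorial optimization texts: by induction the cofactors of a nonsingular $k\times k$ submatrix $B$ lie in $\{-1,0,1\}$, so the first row $z$ of $\mathrm{adj}(B)$ satisfies $z\in\{-1,0,1\}^{k}$, $z\neq 0$, and $B^{\top}z=\det(B)\,e_{1}$; applying hypothesis (b) to the support $R$ of $z$ and encoding the resulting partition as $\tilde z$ gives $B^{\top}\tilde z\in\{-1,0,1\}^{k}$; the entrywise congruence $z\equiv\tilde z\pmod 2$ forces $B^{\top}\tilde z=c\,e_{1}$ with $c\equiv\det B\pmod 2$; nonsingularity rules out $c=0$ (since $\tilde z\neq 0$), so $\det B$ is odd, $\tilde z=(c/\det B)\,z$, and comparing the unit-magnitude entries on $R$ yields $|\det B|=1$. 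Each of these steps checks out, including the delicate point that $\det B$ is not assumed odd beforehand but is derived from $c=\pm 1$. One small remark: the appeal to the transpose invariance of Theorem~\ref{theo:transpose} is harmless but superfluous --- taking a \emph{row} of the adjugate already makes $R$ a set of rows, exactly as hypothesis (b) requires. As for what each approach buys: the paper's citation is the economical choice, since it needs only the statement; your derivation supplies the omitted proof in full, making the total unimodularity argument of Section~\ref{sec:projection} self-contained.
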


\begin{theorem}\label{theo:matrixTU}
    The matrix associated with constraints (\ref{FLSD1}) is totally unimodular.
\end{theorem}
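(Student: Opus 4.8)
The plan is to apply the Ghouila-Houri characterization (Theorem~\ref{theo:partitionmatrix}) together with the transpose and identity reductions of Theorem~\ref{theo:transpose}. First I would write the constraint matrix $A$ of \eqref{FLSD1} explicitly: its rows are indexed by the tuples $(i,j,t,k)$ with $t\leq k$, and its columns by the dual variables $\theta^{ij}_t$, $\gamma^{ij}_{tk}$ and $\phi^i_k$. Since each constraint $\theta^{ij}_t + \gamma^{ij}_{tk} + \phi^i_k \geq 0$ has all coefficients equal to $+1$, every entry of $A$ lies in $\{0,+1\}\subseteq\{-1,0,+1\}$, so condition~(a) of Theorem~\ref{theo:partitionmatrix} is immediate and the whole content lies in condition~(b).

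The key structural observation is that each variable $\gamma^{ij}_{tk}$ appears in exactly one constraint, namely the one indexed by the same tuple $(i,j,t,k)$. Hence, after a suitable column ordering, the columns associated with the $\gamma$ variables form an identity block, so that $A=(A',I)$, where $A'$ collects only the $\theta$ and $\phi$ columns. By Theorem~\ref{theo:transpose}(b), $A$ is TU if and only if $A'$ is TU, which reduces the task to the matrix $A'$ in which every row $(i,j,t,k)$ has exactly two nonzero entries: a $+1$ in column $\theta^{ij}_t$ and a $+1$ in column $\phi^i_k$.

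Now $A'$ is precisely the edge--vertex incidence matrix of a bipartite graph whose vertex classes are the $\theta$ variables on one side and the $\phi$ variables on the other, each row being an edge joining $\theta^{ij}_t$ to $\phi^i_k$. To exploit this I would pass to the transpose via Theorem~\ref{theo:transpose}(a): $A'$ is TU if and only if $(A')^T$ is TU, and in $(A')^T$ each column (a constraint) has exactly two $+1$'s, one in a $\theta$ row and one in a $\phi$ row. I then apply Theorem~\ref{theo:partitionmatrix} to $(A')^T$: given an arbitrary subset $M$ of its rows, I take the partition $M_1 = M\cap\{\theta\text{ rows}\}$ and $M_2 = M\cap\{\phi\text{ rows}\}$. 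For each column/constraint $(i,j,t,k)$ the resulting signed column sum equals $\mathbf{1}[\theta^{ij}_t\in M]-\mathbf{1}[\phi^i_k\in M]\in\{-1,0,+1\}$, which verifies condition~(b) and finishes the argument.

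I expect the main difficulty to be organizational rather than technical: recognizing that the $\gamma$ columns constitute an identity block that can be stripped off, and choosing to work with the transpose so that the natural $\theta$/$\phi$ bipartition directly supplies the required partition. Applying Ghouila-Houri to $A$ itself (partitioning the constraints) would instead force a balanced edge-colouring argument, which is avoidable; the only real care is to deploy the reductions of Theorem~\ref{theo:transpose} in the correct order before invoking Theorem~\ref{theo:partitionmatrix}.
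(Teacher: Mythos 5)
Your proof is correct and takes essentially the same route as the paper's: you strip off the $\gamma$ columns as an identity block via Theorem~\ref{theo:transpose}(b), pass to the transpose via Theorem~\ref{theo:transpose}(a), and apply the Ghouila-Houri criterion of Theorem~\ref{theo:partitionmatrix} with the partition of $M$ into $\theta$-rows and $\phi$-rows (the paper's $M_1$/$M_2$ labels are merely swapped, which is immaterial under the absolute value). Your write-up simply makes explicit what the paper leaves implicit, namely the bipartite edge--vertex incidence structure of the reduced matrix and the column-sum computation $\mathbf{1}[\theta^{ij}_t\in M]-\mathbf{1}[\phi^i_k\in M]\in\{-1,0,+1\}$.
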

\begin{proof}
Denote $A$ the matrix associated to constraints (\ref{FLSD1}). Let $A=(B,I)$, where $B$ is the submatrix with the columns corresponding to variables $\theta$ and $\phi$ and $I$ is the identity submatrix with those columns related to the $\gamma$ variables. Using Theorem~\ref{theo:transpose}, we can concentrate on $B$, as $A=(B,I)$ is totally unimodular if $B$ is totally unimodular. Furthermore, we focus on $B^T$ and show that the properties in Theorem~\ref{theo:partitionmatrix} hold. Property (a) clearly holds. Now, given $M$ we add to $M_1$ the lines associated to the $\phi$ variables and to $M_2$ those associated to the $\theta$ variables. Thus the result holds.  
\end{proof}

We now analyze nondominated inequalities (\ref{ineq:projected}) obtained via (\ref{FLSDobj})-(\ref{FLSD3}). \rafaelC{Note that Theorem~\ref{theo:matrixTU} implies that we can concentrate only on integer solutions for (\ref{FLSDobj})-(\ref{FLSD3}).}

\begin{lemma}\label{lemma:singleitem}
Nondominated inequalities are only related to a single item $i\in I$.
\end{lemma}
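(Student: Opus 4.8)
The plan is to exploit the fact that the dual constraint system \eqref{FLSD1}--\eqref{FLSD3}, the dual objective \eqref{FLSDobj}, and the template inequality \eqref{ineq:projected} all \emph{decompose} across the item index $i$. First I would observe that a dual variable carrying item index $i$ --- namely $\theta^{ij}_t$, $\gamma^{ij}_{tk}$, and $\phi^i_k$ --- appears exclusively in constraints \eqref{FLSD1}--\eqref{FLSD3} that also carry the index $i$, so there is no coupling between distinct items. Consequently the dual feasible region is the Cartesian product $K=\prod_{i\in I}K_i$ of per-item polyhedral cones $K_i$, each defined by the restriction of \eqref{FLSD1}--\eqref{FLSD3} to the variables of item $i$.

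Next, given any dual-feasible ray $(\theta,\gamma,\phi)\in K$ yielding an inequality \eqref{ineq:projected}, I would split it into its per-item components: for each $i\in I$ let $(\theta^{(i)},\gamma^{(i)},\phi^{(i)})$ be the vector agreeing with $(\theta,\gamma,\phi)$ on the coordinates indexed by item $i$ and zero elsewhere. Since $0\in K_{i'}$ for every $i'$ and $K$ is a product cone, each component again lies in $K$ (and, trivially, still respects the normalization $\phi^i_t\geq -1$, as the zeroed entries satisfy $0\geq -1$). Hence each component produces a valid projected inequality
\begin{equation*}
 \sum_{j=1}^{NJ}\sum_{t=1}^{NT}\theta^{ij}_t x^{ij}_t + \sum_{j=1}^{NJ}\sum_{t=1}^{NT}\sum_{k=t}^{NT}\gamma^{ij}_{tk} d^i_k y^j_t + \sum_{t=1}^{NT}\phi^i_t d^i_t \geq 0,
\end{equation*}
which involves the variables of the single item $i$ only.

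The crux is then to note that, because both the left-hand side of \eqref{ineq:projected} and its right-hand side ($0$) are separable over $i$, summing these single-item inequalities over all $i\in I$ reproduces exactly the original inequality \eqref{ineq:projected}. Therefore any inequality spanning more than one item is a nonnegative combination (indeed the plain sum) of single-item inequalities, and is thus implied by them; a nondominated inequality can consequently have nonzero coefficients for at most one item, which is the claim.

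I expect the only delicate point to be justifying that each per-item component is itself dual-feasible and valid: both follow directly from the block structure, since zeroing out the coordinates of the other items violates neither any constraint \eqref{FLSD1}--\eqref{FLSD3} nor the normalization, and validity of \eqref{ineq:projected} for every ray in $K$ follows from LP duality applied to \eqref{FLSDobj}--\eqref{FLSD3}. The comment preceding the lemma, that Theorem~\ref{theo:matrixTU} lets us restrict attention to integral dual solutions, is not needed for this decomposition argument, but it guarantees that the resulting single-item inequalities are of the clean integral form to be characterized subsequently.
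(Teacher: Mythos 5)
Your proof is correct and follows essentially the same route as the paper's: both exploit the block-diagonal structure of \eqref{FLSD1}--\eqref{FLSD3} across items to split any dual ray into per-item components whose single-item inequalities sum back to the original, so that any multi-item inequality is dominated. Your version merely spells out the details (the Cartesian product cone, zero-padding feasibility, preservation of the normalization $\phi^i_t\geq -1$) that the paper's one-line argument leaves implicit.
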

\begin{proof}
Constraints (\ref{FLSD1})-(\ref{FLSD3}) do not relate variables connected to different items. This implies that (\ref{FLSDobj})-(\ref{FLSD3}) can be solved separately for each item. Thus, any inequality (\ref{ineq:projected}) which contains more than one item can be obtained as a linear combination of the constraints related to each item separately. 
\end{proof}

\begin{lemma}\label{lemma:thetaorgamma}
In a nondominated inequality, whenever $\phi^i_k = -1 $, for each period $t\leq k$ \marcioC{either (a) $\theta^{ij}_t = 1$ and $\gamma^{ij}_{tk} = 0$ or (b) $\theta^{ij}_t = 0$ and $\gamma^{ij}_{tk} = 1$.} 
\end{lemma}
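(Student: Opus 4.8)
The plan is to argue by a local exchange (reduction) argument on a nondominated dual solution, exploiting the normalization $\phi^i_t \geq -1$ together with the very different ``scopes'' of the $\theta$ and $\gamma$ variables in constraints \eqref{FLSD1}. First, by Lemma~\ref{lemma:singleitem} I may restrict attention to a single item $i$, and by Theorem~\ref{theo:matrixTU} the matrix of \eqref{FLSD1} is totally unimodular, so I can assume the nondominated inequality comes from an \emph{integral} dual solution $(\theta,\gamma,\phi)$ with $\phi^i_t \geq -1$ for all $t$. Fixing $i$, $j$, a period $k$ with $\phi^i_k = -1$, and a period $t \leq k$, the relevant constraint \eqref{FLSD1} reads $\theta^{ij}_t + \gamma^{ij}_{tk} \geq 1$, so at least one of the two variables is at least $1$; the whole content of the lemma is that, for a nondominated inequality, one of them equals exactly $1$ and the other equals $0$.

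Next I would establish the upper bounds $\theta^{ij}_t \leq 1$ and $\gamma^{ij}_{tk} \leq 1$. For $\gamma^{ij}_{tk}$ this is easy because this variable occurs in \emph{only one} constraint of \eqref{FLSD1}, the one indexed by $(i,j,t,k)$: if $\gamma^{ij}_{tk} \geq 2$ then that constraint has left-hand side $\theta^{ij}_t + \gamma^{ij}_{tk} + \phi^i_k \geq 0 + 2 - 1 = 1 > 0$, so decreasing $\gamma^{ij}_{tk}$ by one unit keeps the dual point feasible, does not increase the objective \eqref{FLSDobj} (its coefficient $d^i_k \hat{y}^j_t$ is nonnegative), and does not increase the coefficient of $y^j_t$ in \eqref{ineq:projected}, hence yields an inequality dominating the original, a contradiction. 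For $\theta^{ij}_t$ the same idea applies, but now I must check all constraints in which $\theta^{ij}_t$ appears, namely those indexed by $k' \in \{t,\ldots,NT\}$; here the normalization is exactly what is needed, since $\theta^{ij}_t \geq 2$ and $\phi^i_{k'} \geq -1$ force every such left-hand side to be at least $2 + 0 - 1 = 1 > 0$, so $\theta^{ij}_t$ can again be lowered by one unit without losing feasibility, producing a dominating inequality. Thus both variables lie in $\{0,1\}$.

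Finally I would rule out the case $\theta^{ij}_t = \gamma^{ij}_{tk} = 1$. If both equal $1$, lowering $\gamma^{ij}_{tk}$ to $0$ turns the single constraint containing it into $\theta^{ij}_t + 0 + \phi^i_k = 1 - 1 = 0 \geq 0$, which is still feasible; as before this does not increase the objective and weakly decreases the coefficient of $y^j_t$, so it dominates the original inequality, contradicting nondominance. Combining this with $\theta^{ij}_t + \gamma^{ij}_{tk} \geq 1$ and $\theta^{ij}_t,\gamma^{ij}_{tk} \in \{0,1\}$ leaves exactly the two alternatives (a) and (b) of the statement.

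The step I expect to require the most care is the precise meaning of ``dominates'' used in the reduction arguments: I must verify that each single-unit decrease keeps $(\theta,\gamma,\phi)$ feasible for \eqref{FLSD1}--\eqref{FLSD3} (this is where the scope of each variable and the normalization $\phi \geq -1$ enter), and that the resulting inequality \eqref{ineq:projected} has componentwise no-larger coefficients and the same right-hand side, so that it indeed implies the original over the nonnegative orthant. A minor point to address is the degenerate situation $d^i_k = 0$, in which reducing $\gamma^{ij}_{tk}$ leaves the inequality unchanged; this is harmless if one works with minimal (extreme) dual solutions, for which every variable already takes its least feasible value.
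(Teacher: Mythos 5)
Your proof is correct and follows essentially the same route as the paper's: both rely on the integrality granted by Theorem~\ref{theo:matrixTU}, on the constraint $\theta^{ij}_t + \gamma^{ij}_{tk} \geq 1$ forced by $\phi^i_k = -1$ in \eqref{FLSD1}, and on the nonnegativity of the objective coefficients in \eqref{FLSDobj} to conclude that a nondominated (minimal) dual solution sets exactly one of the two variables to $1$. Your write-up simply makes explicit the single-unit exchange steps (bounding $\theta^{ij}_t$ and $\gamma^{ij}_{tk}$ by $1$ using the variables' scopes and the normalization $\phi^i_t \geq -1$, then excluding $\theta^{ij}_t = \gamma^{ij}_{tk} = 1$) and the degenerate case $d^i_k = 0$, all of which the paper's terser argument leaves implicit.
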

\begin{proof}
Note that both $\theta^{ij}_t = 1$ and $\gamma^{ij}_{tk} = 1$ have nonnegative coefficients in the objective function (\ref{FLSDobj}). With $\phi^i_k = -1$, constraints (\ref{FLSD1}) require that $\theta^{ij}_t + \gamma^{ij}_{tk} \geq 1$ for every $t\leq k$. Whenever $\theta^{ij}_t = 1$,  $\gamma^{ij}_{tk}$ can be set to zero. \rafaelC{On the other hand}, note that whenever $\gamma^{ij}_{tk}=1$, $\gamma^{ij}_{tk'}=1$ for every $k'\geq k$ for which $\phi^i_{k'} = -1$\marcioC{, since $\theta^{ij}_t = 0$}.
\end{proof}

\begin{lemma}\label{lemma:onetol}
For a given item $i\in I$, if there is a most violated inequality (\ref{ineq:projected}) in which $\phi^i_k = -1 $ for a given $k>1$, then there is a most violated inequality in which $\phi^i_{k'} = -1 $ for every $k' < k$.
\end{lemma}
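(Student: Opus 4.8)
The plan is to argue by an exchange on the dual ray that defines a most violated inequality, inserting the missing periods $k'<k$ one at a time and checking that the violation is preserved at each step.

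First I would set the stage using what is already established. By Lemma~\ref{lemma:singleitem} I may restrict attention to a single item $i$, and by Theorem~\ref{theo:matrixTU} the most violated inequality~\eqref{ineq:projected} is produced by an integral extreme ray of \eqref{FLSDobj}--\eqref{FLSD3}. Such a ray is determined by its support $K=\{k\in T : \phi^i_k=-1\}$: once $K$ is fixed, Lemma~\ref{lemma:thetaorgamma} shows that for each pair $(j,t)$ with $t\le \max K$ the cheapest feasible choice is either $\theta^{ij}_t=1$ (paying $\hat x^{ij}_t$) or $\theta^{ij}_t=0$ together with $\gamma^{ij}_{t\kappa}=1$ for all $\kappa\in K$ with $\kappa\ge t$ (paying $S_t\,\hat y^j_t$, where $S_t:=\sum_{\kappa\in K,\ \kappa\ge t}d^i_\kappa$ does not depend on $j$). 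Writing $\psi(K)$ for the resulting violation, I obtain
$$\psi(K)=\sum_{\kappa\in K}d^i_\kappa-\sum_{j=1}^{NJ}\sum_{t\le \max K}\min\bigl\{\hat x^{ij}_t,\ S_t\,\hat y^j_t\bigr\},$$
so that finding a most violated inequality amounts to maximizing $\psi$ over $K\subseteq T$.

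Next I would take a maximizer $K$ with $k\in K$ and a period $k'<k$ with $k'\notin K$, and exhibit a feasible dual ray for the enlarged support $K\cup\{k'\}$ that is at least as violated. The natural candidate keeps the previous ray, sets $\phi^i_{k'}=-1$, and, for every pair $(j,t)$ with $t\le k'$ that was covered through $\gamma$ (call this set $\Gamma$), additionally sets $\gamma^{ij}_{tk'}=1$; pairs already covered by $\theta^{ij}_t=1$ need no change. This ray satisfies \eqref{FLSD1}--\eqref{FLSD3}, and its objective \eqref{FLSDobj} changes by $-d^i_{k'}+d^i_{k'}\sum_{(j,t)\in\Gamma}\hat y^j_t$. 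Since this ray is feasible but possibly not optimal for $K\cup\{k'\}$, it only provides a lower bound on the best violation, so
$$\psi(K\cup\{k'\})\ \ge\ \psi(K)+d^i_{k'}\Bigl(1-\sum_{(j,t)\in\Gamma}\hat y^j_t\Bigr).$$
If the parenthesized quantity is nonnegative then $\psi(K\cup\{k'\})\ge\psi(K)$; as $K$ is a maximizer the reverse inequality also holds, so $K\cup\{k'\}$ is again most violated. Iterating over the missing periods below $k$ then yields a most violated inequality with $\phi^i_{k'}=-1$ for every $k'<k$, as required.

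The main obstacle is therefore the scalar estimate $\sum_{(j,t)\in\Gamma}\hat y^j_t\le 1$, and this is precisely where feasibility of $(\hat x,\hat y)$ and the maximality of $K$ must be used together: the bound is simply false for an arbitrary support $K$. I would prove it through the deficiency interpretation of $\psi$. Consider the staircase transportation network with a source node for each pair $(j,t)$ of capacity $\hat x^{ij}_t$, an arc of capacity $d^i_k\hat y^j_t$ from $(j,t)$ to each demand $k\ge t$, and a demand $d^i_k$ at period $k$. The quantity subtracted in $\psi(K)$ is exactly the maximum supply deliverable to the demands in $K$, so $\psi(K)$ is the deficiency of the demand set $K$, and a most violated inequality corresponds to a maximum-deficiency set, i.e.\ to a minimum cut. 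Inserting the earlier demand $k'$ adds a single sink arc of capacity $d^i_{k'}$, so by the sensitivity of maximum flow to one capacity the deliverable supply grows by at most $d^i_{k'}$; equivalently the deficiency cannot decrease, which is $\sum_{(j,t)\in\Gamma}\hat y^j_t\le 1$. The delicate points I expect to grind through are (i) that at a maximum-deficiency set the deliverable supply equals the supply-cut value $\sum_j\sum_t\min\{\hat x^{ij}_t,S_t\hat y^j_t\}$, so that deficiency and $\psi$ coincide there — this is where the cumulative feasibility $\sum_j\sum_{u\le t}\hat x^{ij}_u\ge d^i_{1t}$ from \eqref{STD1} enters to rule out an intermediate sink bottleneck — and (ii) that the one-period-at-a-time insertion can be carried out while remaining at a maximum-deficiency set, so that the identification in (i) is available at every step.
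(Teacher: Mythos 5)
Your first half---restricting to a single item, reducing an integral extreme ray to its support $K=\{k:\phi^i_k=-1\}$, the cost formula $\psi(K)$, and the exchange that inserts $k'$ by setting $\phi^i_{k'}=-1$ and $\gamma^{ij}_{tk'}=1$ on the $\gamma$-covered pairs $\Gamma$---is exactly the paper's argument, and your reduction of the lemma to the scalar bound $\sum_{(j,t)\in\Gamma}\hat y^j_t\le 1$ is correct. The gap is in how you establish that bound. The paper gets it in one line from local optimality of $K$ at period $k$ itself: every $(j,t)\in\Gamma$ has $t\le k'<k$ and $\theta^{ij}_t=0$, so \eqref{FLSD1} (equivalently Lemma~\ref{lemma:thetaorgamma}) forces $\gamma^{ij}_{tk}=1$; since zeroing $\phi^i_k$ together with the entire column $\gamma^{\cdot\cdot}_{\cdot k}$ is again feasible, maximality of the violation gives $d^i_k\bigl(1-\sum_{(j,t):\,\gamma^{ij}_{tk}=1}\hat y^j_t\bigr)\ge 0$, hence $\sum_{(j,t)\in\Gamma}\hat y^j_t\le 1$ whenever $d^i_k>0$. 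In your own notation this is just the comparison of $\psi(K)$ with $\psi(K\setminus\{k\})$, whose difference is at most $d^i_k\bigl(1-\sum_{(j,t)\in\Gamma}\hat y^j_t\bigr)$. Contrary to your claim, no feasibility of $(\hat x,\hat y)$---in particular no use of \eqref{STD1}---is needed; nonnegativity of $\hat y$ and of the demands suffices, because maximality of $K$ is used, not an arbitrary support.

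Your flow substitute, as sketched, does not close this. In your network the deficiency of a demand set $K''$ equals $\max_{K'\subseteq K''}\psi(K')$, since a min cut may sever sink arcs, which amounts to deleting periods from the support. Monotonicity of deficiency under inserting $k'$ therefore only says that \emph{some subset} of $K\cup\{k'\}$ attains the maximum violation---which you already know, since $K$ does. The step ``equivalently the deficiency cannot decrease, which is $\sum_{(j,t)\in\Gamma}\hat y^j_t\le 1$'' silently identifies the deficiency of $K\cup\{k'\}$ with $\psi(K\cup\{k'\})$, i.e., assumes the min cut of the enlarged network contains no sink arc; that is precisely your unproven point (ii), and it does not follow merely from $K\cup\{k'\}$ being a maximum-deficiency set, so the argument is circular exactly at the crux. (Your point (i), by contrast, needs no appeal to \eqref{STD1}: at a maximizer $K$, the inequality $\psi(K)\ge\psi(K\setminus Q)$ for all $Q\subseteq K$ says exactly that the all-supply cut is minimal.) Replacing the flow machinery by the one-line comparison with $K\setminus\{k\}$ repairs the proof and recovers the paper's argument; note that both your version and the paper's tacitly assume $d^i_k>0$, which is harmless since zero-demand periods can be added to or dropped from the support at zero cost.
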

\begin{proof}

Assume there is a most violated inequality obtained as (\ref{FLSDobj}) represented by a solution $(\hat{\phi},\hat{\gamma},\hat{\theta})$ in which $\hat{\phi}^i_{k} = -1$.
We want to show that we can set $\hat{\phi}^i_{k'} = -1$ and obtain another most violated inequality. 
If we set $\hat{\phi}^i_{k'} = -1$, observe that constraints (\ref{FLSD1}) are already satisfied for every $t\leq k'$ such that $\hat{\theta}^{ij}_t = 1$.
Now consider the periods $t\leq k'$ such that $\hat{\theta}^{ij}_t = 0$ and note that $\hat{\gamma}^{ij}_{tk} = 1$. Let $T'$ be formed by all these periods. 
Observe that the summation $- d^{i}_k + \sum_{t\in T'} (\hat{\gamma}^{ij}_{tk} \hat{y}^j_t) d^i_k $ is less than or equal to zero as the inequality is a most violated one. Thus $- d^{i}_{k'} + \sum_{t\in T'} (\hat{\gamma}^{ij}_{tk'} \hat{y}^j_t) d^i_{k'}$ is also nonnegative. Thus, setting $\hat{\phi}^i_{k'}=-1$ and also $\hat{\gamma}^{ij}_{tk'} = 1$ for every $t \in T'$ leads to an inequality which is at least as violated as the original one. As this is true for any $k'<k$, the result holds.
\end{proof}

\begin{theorem}\label{theo:obtainlsj}
Every $(l,S_j)$-inequality can be obtained as (\ref{ineq:projected}).
\end{theorem}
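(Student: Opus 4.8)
The plan is to prove the statement by explicit construction: exhibit a dual-feasible point of the cone \eqref{FLSD1}--\eqref{FLSD3} whose associated inequality \eqref{ineq:projected} is literally the prescribed $(l,S_j)$-inequality. First I would invoke Lemma~\ref{lemma:singleitem} to reduce to a single item $i$, so the task becomes: given $L=\{1,\dots,l\}$ and sets $S_j\subseteq L$, produce $(\theta,\gamma,\phi)$ feasible for \eqref{FLSD1}--\eqref{FLSD3} (respecting the normalization $\phi^i_t\geq -1$) for which \eqref{ineq:projected}, restricted to item $i$, coincides with \eqref{ineq:lsj}. The strategy is to read off the dual values by matching coefficients and then to check feasibility.

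The construction is essentially forced by the matching. Comparing the constant terms, $\sum_{t}\phi^i_t d^i_t$ must equal $-d^i_{1l}$, so I would set $\phi^i_k=-1$ for $1\leq k\leq l$ and $\phi^i_k=0$ for $k>l$. Comparing the coefficients of $x^{ij}_t$, which in \eqref{ineq:projected} are exactly $\theta^{ij}_t$, I would set $\theta^{ij}_t=1$ for $t\in L\setminus S_j$ and $\theta^{ij}_t=0$ otherwise. Finally, since the coefficient of $y^j_t$ in \eqref{ineq:projected} is $\sum_{k=t}^{NT}\gamma^{ij}_{tk}d^i_k$ and it must equal $d^i_{tl}=\sum_{k=t}^{l}d^i_k$ precisely when $t\in S_j$ (and $0$ otherwise), I would set $\gamma^{ij}_{tk}=1$ for $t\in S_j$ with $t\leq k\leq l$, and $\gamma^{ij}_{tk}=0$ in all remaining cases.

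It then remains to verify dual feasibility, which is where the only real content lies. Constraints \eqref{FLSD2} and \eqref{FLSD3} are immediate since every assigned value lies in $\{0,1\}$. The substantive step, and the main (if mild) obstacle, is \eqref{FLSD1}: $\theta^{ij}_t+\gamma^{ij}_{tk}+\phi^i_k\geq 0$ for all $t\leq k$. When $k>l$ we have $\phi^i_k=0$ and the inequality is trivial. When $k\leq l$ we have $\phi^i_k=-1$ and must show $\theta^{ij}_t+\gamma^{ij}_{tk}\geq 1$; here $t\leq k\leq l$, so if $t\notin S_j$ then $t\in L\setminus S_j$ forces $\theta^{ij}_t=1$, whereas if $t\in S_j$ then $t\leq k\leq l$ forces $\gamma^{ij}_{tk}=1$. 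Either way the bound holds, so $(\theta,\gamma,\phi)$ is feasible.

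To conclude, I would substitute these values back into \eqref{ineq:projected} for the single item $i$. The three sums collapse to $\sum_{j}\sum_{t\in L\setminus S_j}x^{ij}_t$, $\sum_{j}\sum_{t\in S_j}d^i_{tl}\,y^j_t$, and $-d^i_{1l}$, respectively, which rearranges to exactly the $(l,S_j)$-inequality \eqref{ineq:lsj}. Since $L$ and the sets $S_j$ were arbitrary, this shows that every $(l,S_j)$-inequality arises as an inequality of the form \eqref{ineq:projected}, completing the argument.
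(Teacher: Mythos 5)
Your proof is correct and takes essentially the same route as the paper's: the identical explicit dual construction ($\phi^i_k=-1$ for $k\in L$, $\theta^{ij}_t=1$ for $t\in L\setminus S_j$, $\gamma^{ij}_{tk}=1$ for $t\in S_j$ and $t\le k\le l$) followed by substitution into \eqref{ineq:projected}. The only difference is that you explicitly verify dual feasibility of \eqref{FLSD1}--\eqref{FLSD3}, a step the paper leaves implicit, and in doing so your range condition $t\le k\le l$ also quietly corrects the paper's typo ``$t\geq k$'' in the specification of the nonzero $\gamma^{ij}_{tk}$ values.
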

\begin{proof}
Consider an $(l,S_j)$-inequality obtained for a given $i\in I$ and $l \in T$, with sets $S_j$ for each $j\in J$. This inequality can be obtained as (\ref{ineq:projected}) by considering as only nonzero values: 
\begin{itemize}
\item $\phi^i_k = -1$ for every $k \in L\rafaelC{=\{1,\ldots,l\}}$; 
\item $\theta^{ij}_t \marcioC{= 1}$ for every $j\in J$ and $t\in L\setminus S_j$;
\item $\gamma^{ij}_{tk} \marcioC{= 1}$ for every $j \in J$, $t\in S_j$ and $k \in L$ with $t \geq k$,
\end{itemize}
\rafaelC{
which leads to
\begin{equation*}
     \sum_{j=1}^{NJ} \sum_{t \in L\setminus S_j} \theta^{ij}_t {x}^{ij}_{t} + \sum_{j=1}^{NJ} \sum_{t \in S_j} \sum_{k=t}^{NT} \gamma^{ij}_{tk} d^{i}_k {y}^j_{t} + \sum_{t=1}^{l} \phi^i_t d^i_t \geq 0,
\end{equation*}
and, consequently, to
\begin{equation*}
     \sum_{j=1}^{NJ} \left( \sum_{t \in L\setminus S_j} {x}^{ij}_{t} + \sum_{t \in S_j} \sum_{k=t}^{NT} d^{i}_k {y}^j_{t} \right) \geq d^i_{1l},
\end{equation*}
which is equivalent to \eqref{ineq:lsj}.
}

\end{proof}

\begin{theorem}\label{theo:relation}
For every most violated inequality obtained as (\ref{FLSDobj}), there is a corresponding most violated $(l,S_j)$-inequality.
\end{theorem}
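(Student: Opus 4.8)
The plan is to reverse the construction of Theorem~\ref{theo:obtainlsj}: I would show that every most violated inequality \eqref{ineq:projected} can be assumed to have exactly the structured nonzero pattern used there, so that it literally coincides with some $(l,S_j)$-inequality whose violation is the same. First I would invoke Lemma~\ref{lemma:singleitem} to restrict attention to a single item $i\in I$. Then, using Theorem~\ref{theo:matrixTU} together with the normalization $\phi^i_t\geq -1$, I would assume the underlying extreme ray is integral with $\phi^i_t\in\{-1,0\}$ and $\theta,\gamma\in\{0,1\}$; indeed, any positive $\phi^i_t$ would only increase the left-hand side of \eqref{ineq:projected} and therefore reduce the violation, so in a most violated ray it is never helpful.

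Next I would set $L=\{t : \phi^i_t=-1\}$ and appeal to Lemma~\ref{lemma:onetol} to assume, without loss of generality, that $L=\{1,\ldots,l\}$ is a prefix. For periods $t>l$ one has $\phi^i_k=0$ for all $k\geq t$, so constraints \eqref{FLSD1} reduce to $\theta^{ij}_t+\gamma^{ij}_{tk}\geq 0$; since these variables carry nonnegative coefficients in \eqref{FLSDobj}, a most violated ray sets $\theta^{ij}_t=\gamma^{ij}_{tk}=0$ there. For $t\in L$, Lemma~\ref{lemma:thetaorgamma} forces the dichotomy $\theta^{ij}_t=1$ with $\gamma^{ij}_{tk}=0$, or $\theta^{ij}_t=0$ with $\gamma^{ij}_{tk}=1$ for every $k\in\{t,\ldots,l\}$. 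This lets me define $S_j=\{t\in L : \theta^{ij}_t=0\}$ for each $j\in J$, which recovers precisely the nonzero pattern listed in the proof of Theorem~\ref{theo:obtainlsj}.

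Substituting this pattern into \eqref{ineq:projected} then collapses the $\gamma$-terms through $\sum_{k=t}^{NT}\gamma^{ij}_{tk}d^i_k=\sum_{k=t}^{l}d^i_k=d^i_{tl}$ and the $\phi$-terms through $\sum_{t=1}^{l}\phi^i_t d^i_t=-d^i_{1l}$, so that the left-hand side of \eqref{ineq:projected} becomes $\sum_{j=1}^{NJ}\big(\sum_{t\in L\setminus S_j}x^{ij}_t+\sum_{t\in S_j}d^i_{tl}\,y^j_t\big)-d^i_{1l}$. This is exactly the left-hand side of the $(l,S_j)$-inequality \eqref{ineq:lsj} minus its right-hand side $d^i_{1l}$. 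Consequently, the violation of the projected inequality, namely the negative of its left-hand side evaluated at $(\hat{x},\hat{y})$, equals $d^i_{1l}-\sum_{j=1}^{NJ}\big(\sum_{t\in L\setminus S_j}\hat{x}^{ij}_t+\sum_{t\in S_j}d^i_{tl}\,\hat{y}^j_t\big)$, which is precisely the violation of the associated $(l,S_j)$-inequality. Hence a most violated inequality obtained as \eqref{FLSDobj} maps to an $(l,S_j)$-inequality of identical violation, establishing the correspondence.

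The main obstacle I anticipate is the bookkeeping that justifies the clean $\{0,-1\}$ and $\{0,1\}$ structure everywhere in a most violated ray, rather than the final algebraic identity, which is routine once the structure is fixed. The delicate points are ruling out nonzero $\theta,\gamma$ components outside $L$ and the prefix reduction of $L$; both are handled by Lemma~\ref{lemma:onetol} and Lemma~\ref{lemma:thetaorgamma}, with Theorem~\ref{theo:matrixTU} guaranteeing integrality and Theorem~\ref{theo:obtainlsj} supplying the target pattern. Once these preparatory results are in place, the structural reduction and the violation-matching computation go through directly.
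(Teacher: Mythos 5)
Your proof is correct and follows essentially the same route as the paper: the paper's proof of Theorem~\ref{theo:relation} simply cites Lemmas~\ref{lemma:singleitem}, \ref{lemma:thetaorgamma} and~\ref{lemma:onetol} (with Theorem~\ref{theo:matrixTU} invoked just beforehand to justify restricting to integral extreme rays), and you fill in exactly those steps plus the violation-matching algebra already carried out in Theorem~\ref{theo:obtainlsj}. Your explicit treatment of the components outside the prefix $L$ (setting $\theta^{ij}_t$ and $\gamma^{ij}_{tk}$ to zero for $t>l$ or $k>l$ via their nonnegative objective coefficients) is a detail the paper leaves implicit, not a departure from its argument.
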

\begin{proof}
The result follows from Lemmas~\ref{lemma:singleitem},~\ref{lemma:thetaorgamma} and~\ref{lemma:onetol}. 
\end{proof}

\begin{corollary}\label{corollary:equalbounds}
Let $\underline{z}_{STD+}$ be the value of the linear relaxation of (\ref{STDobj})-(\ref{STD4}) with the addition of the inequalities (\ref{ineq:lsj}), and $\underline{z}_{FL}$ be the value of the linear relaxation of (\ref{FLobj})-(\ref{FL4}), then $\underline{z}_{STD+} = \underline{z}_{FL}$.
\end{corollary}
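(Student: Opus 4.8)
The plan is to read the feasibility system (\ref{FLSobj})--(\ref{FLS4}) exactly as a test of membership of a point $(\hat x,\hat y)$ in the projection of the facility location polyhedron onto the original $(x,y)$-space, and then to transfer the equivalence of inequality families established in Theorems~\ref{theo:obtainlsj} and~\ref{theo:relation} to an equality of polyhedra, from which equality of the two relaxation values follows. First I would argue, via linear programming duality applied to (\ref{FLSobj})--(\ref{FLS4}), that a point satisfying the demand constraints (\ref{STD1}) and the bounds lies in this projection if and only if it satisfies every projected inequality (\ref{ineq:projected}): the primal feasibility system is feasible precisely when the dual (\ref{FLSDobj})--(\ref{FLSD3}) admits no extreme ray of negative value, which is exactly the assertion that all inequalities (\ref{ineq:projected}) hold. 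Here I would invoke Theorem~\ref{theo:matrixTU} to restrict attention to integral rays, so that the family (\ref{ineq:projected}) genuinely describes the projection. Consequently, the projection of the linear relaxation of (\ref{FLobj})--(\ref{FL4}) onto the $(x,y)$-space is the polyhedron $P$ defined by the original constraints together with all inequalities (\ref{ineq:projected}).

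Next I would show that $P$ coincides with the polyhedron $Q$ defining $\underline{z}_{STD+}$, i.e.\ the feasible region of the linear relaxation of (\ref{STDobj})--(\ref{STD4}) augmented with the $(l,S_j)$-inequalities (\ref{ineq:lsj}). The inclusion $P\subseteq Q$ is immediate from Theorem~\ref{theo:obtainlsj}: since every $(l,S_j)$-inequality is itself of the form (\ref{ineq:projected}), any point of $P$ satisfies all $(l,S_j)$-inequalities and hence lies in $Q$. For the reverse inclusion $Q\subseteq P$, I would use Theorem~\ref{theo:relation} in contrapositive form: any point violating some inequality (\ref{ineq:projected}) yields a most violated such inequality, to which there corresponds a most violated (hence violated) $(l,S_j)$-inequality; thus a point of $Q$, violating no (\ref{ineq:lsj}), violates no (\ref{ineq:projected}) either and belongs to $P$. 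Therefore $P=Q$.

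Finally I would verify that the two relaxations carry the same objective over this common feasible region. Under the identification $x^{ij}_t=\sum_{k=t}^{NT} X^{ij}_{tk}$, the purchasing and transaction terms of (\ref{FLobj}) agree termwise with those of (\ref{STDobj}); moreover, because constraints (\ref{FL1}) force the demands to be met exactly, the inventory of item $i$ carried at the end of period $t$, namely $\sum_{j}\sum_{u\le t}\sum_{k>t} X^{ij}_{uk}$, equals $\sum_{j}\sum_{u\le t} x^{ij}_u - d^i_{1t}$, so the holding term of (\ref{FLobj}) also collapses to that of (\ref{STDobj}). Hence minimizing either objective over $P=Q$ produces the same optimum, and $\underline{z}_{FL}=\underline{z}_{STD+}$.

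The main obstacle I anticipate is the first step: making the duality argument fully rigorous. In particular, I would need to justify that constraint (\ref{FLS3}) may be taken at equality—as observed just after (\ref{FLS4}), using $\sum_{j}\sum_{t} \hat x^{ij}_t = d^i_{1,NT}$ together with (\ref{FLS1})—so that the $\le$ in the separation system faithfully encodes the aggregation $x^{ij}_t=\sum_k X^{ij}_{tk}$ required by the projection, and to confirm that the normalization $\phi^i_t\ge -1$ used to write (\ref{ineq:projected}) loses no relevant extreme ray. Once this characterization of the projection is secured, the remaining steps are routine applications of the preceding theorems.
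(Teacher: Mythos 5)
Your proposal is correct and takes essentially the same route as the paper: the paper's proof of Corollary~\ref{corollary:equalbounds} is simply the observation that it follows from Theorems~\ref{theo:obtainlsj} and~\ref{theo:relation}, resting on exactly the machinery you spell out (the separation system (\ref{FLSobj})--(\ref{FLS4}), its dual, the total unimodularity result of Theorem~\ref{theo:matrixTU}, and the normalization $\phi^i_t \geq -1$ with (\ref{FLS3}) tight by nonnegativity of the costs). You merely make explicit the duality/projection steps and the objective-function correspondence that the paper leaves implicit, which is a faithful and rigorous unpacking rather than a different argument.
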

Corollary~\ref{corollary:equalbounds} follows from Theorems~\ref{theo:obtainlsj}~and~\ref{theo:relation}. 

\section{A simple MIP heuristic}
\label{sec:mipheuristic}

In this section, we show how to use the facility location formulation, which often provides strong relaxations, in a heuristic way. Note that its $O(NI\times NJ \times NT^2)$ variables and constraints \rafaelC{turn} the formulation prohibitive for being used to deal with large instances.

Let $K_{MH}$ be a constant integer given as input to the MIP heuristic. The MIP heuristic only considers variables $X^{ij}_{tk}$ defined for periods $t\in T$ and $k \in T$, with $t\leq k$ and $k \leq t + K-1$, i.e., variables corresponding to an interval of size $K_{MH}$. The MIP heuristic thus consists \rafaelC{of} solving the formulation
\begin{align}   
z_{FL(K_{MH})} = & \min \ \sum_{i=1}^{NI} \sum_{j=1}^{NJ} \sum_{t=1}^{NT} \sum_{\substack{k=t \\ k \leq t + K_{MH}-1 }}^{NT} P_{ij} X^{ij}_{tk}  + \sum_{j=1}^{NJ} \sum_{t=1}^{NT} O_j y^j_t +  \sum_{i=1}^{NI} \sum_{t=1}^{NT} H_i (\sum_{u=1}^t \sum_{\substack{k=t+1 \\ k \leq u+K_{MH} -1 }}^{NT} X^{ij}_{uk})  \label{HFLobj} \\
& \sum_{j=1}^{NJ}\sum_{\substack{t=1 \\ k \leq t + K_{MH}-1}}^{t}X^{ij}_{tk} = d^i_k, \qquad \textrm{for } i \in I, \ k \in T, \label{HFL1}\\
& X^{ij}_{tk}  \leq d^i_k y^j_t, \qquad \textrm{for }  i \in I,\ j\in J,\ t \in T,\ k \in \{ t,\ldots,\min \{t+K_{MH}-1,NT \} \}, \label{HFL2}\\
& y^j_{t} \in \{0,1\}, \qquad \textrm{for } j \in J, \ t \in T, \label{HFL3}\\
& X^{ij}_{tk} \geq 0, \qquad \textrm{for } i \in I,\ j \in J, \ t \in T, \ k\in \{t,\ldots,\min \{t+K_{MH}-1,NT \}\}.  \label{HFL4}
\end{align}
Note that the objective function and all \rafaelC{the} constraints are similar to those of the facility location formulation (\ref{FLobj})-(\ref{FL4}), differing only by the fact that solely a subset of the variables are considered.
This formulation has $O(NI\times NJ \times NT \times K_{MH})$ variables and constraints. \rafaelC{It is worth mentioning that the achieved reduction depends greatly on the size of the planning horizon, and it can get close to one order of magnitude as $K_{MH}$ gets smaller.}

\section{Computational experiments}
\label{sec:computationalexperiments}

This section summarizes the computational experiments conducted to assess the performance of the proposed approaches.
All computational experiments were carried out on a machine running under Ubuntu GNU/Linux, with an Intel(R) Core(TM) i7-8700 CPU @ 3.20GHz processor and 16Gb of RAM. The algorithms were coded in Julia v1.2.0, using JuMP v0.18.6. The formulations were solved using Gurobi 9.0.1 with the standard configurations, except the relative optimality tolerance gap which was set to $10^{-6}$. 
Subsection~\ref{sec:instances} describes the benchmark instances. Subsection~\ref{sec:testedapproaches} details the tested approaches \rafaelC{and parameter settings. Subsection~\ref{sec:computationalpreprocessing} assesses the effectiveness of the preprocessing approach. Subsection~\ref{sec:computationalexact} compares the exact mixed integer programming (MIP) formulations. 
Subsection~\ref{sec:computationalheuristic} displays the results for the MIP heuristic.
}

\subsection{Benchmark instances}
\label{sec:instances}

The computational experiments were performed using the original benchmark set of instances proposed by~\citeA{BasLeu05}, where more details can be obtained. Instances are assembled into instance groups, which are identified as $(NJ,NI,NT)$. Each instance group $(NJ,NI,NT)$ is composed of 15 randomly created instances with $NJ$ suppliers, $NI$ items, and $NT$ periods. All the data were generated using uniform distributions. The transaction costs lie in [1000,2000], the unitary purchase prices lie in [20,50], the holding costs lie in [1,5], and the demands lie in [1,200]. \rafaelC{The benchmark set contains ten instance groups, which are summarized in Table~\ref{tab:isnctances}, giving a total of 150 instances.}

\begin{table}[H]
\small
\color{black}
    \centering
    \begin{tabular}{c}
         \hline Instance groups  \\
         \hline $(3,3,10)$; $(3,3,15)$; $(4,4,10)$; $(4,4,15)$; $(5,5,20)$; $(10,10,50)$; \\
           $(15,15,100)$; $(20,20,100)$; $(20,20,200)$; $(50,50,200)$.\\
         \hline 
    \end{tabular}
    \caption{Dimensions of the instance groups.}
    \label{tab:isnctances}
\end{table}

\rafaelC{Furthermore, in order to analyze the performance of the newly proposed approaches under different cost configurations, we generated two new benchmark sets. The first one has all the data randomly determined similarly to the original instances, with exception of the transaction costs which lie in [10000,12000]. For the second new benchmark set, on the other hand, the transaction costs lie in [15000,17000] while the holding costs lie in [10,20]. Each of these new benchmark sets is composed of ten instance groups, as described in Table~\ref{tab:isnctances}, with 15 instances each. They are denoted as instances N1 and N2, correspondingly.
Observe that, when compared to the original instances, instances N1 have higher transaction costs, while instances N2 have higher transaction costs as well as increased holding costs.
}

\subsection{Tested approaches and parameter settings}
\label{sec:testedapproaches}

The following approaches were considered in the computational experiments:
\begin{enumerate}[(a)]
    \item STD: the standard formulation \eqref{STDobj}-\eqref{STD4};
    \item FL: the facility location formulation \eqref{FLobj}-\eqref{FL4};
    \rafaelC{\item PFL: the facility location formulation \eqref{FLobj}-\eqref{FL4} preprocessed using the results of section~\ref{sec:preprocess};}
    \item \rafaelC{BC: a branch-and-cut using the $(l,S_j)$-inequalities \eqref{ineq:lsj} based on STD;}
    \item MH: the MIP heuristic presented in Section \ref{sec:mipheuristic}.
\end{enumerate}

\subsubsection{\rafaelC{Implementation details and parameter settings}}

All tests for STD, FL, \rafaelC{PFL and BC} were carried out with a time limit of one hour (3600s). \rafaelC{A time limit of ten minutes (600s) was imposed for each execution of MH, which represents one-third of that allowed by \citeA{CarGonTre15}.
The choices of the parameters were defined based on preliminary experiments which took into consideration around 10\% of the original instances, randomly chosen, with varying sizes.}

\rafaelC{
The cutting planes for BC were implemented as solver callbacks. 
In each round of cut, the separation of the inequalities is performed using inspection for each item $i\in I$ and each selected interval $[k,l]$, with $1\leq k \leq l \leq NT$. An interval $[k,l]$ is defined in a way that every period in $[1,k-1]$ is forced to be in $L\setminus S_j$ for every $j\in J$ and a most violated inequality is determined with the appropriate choices of $S_j$ for each of the suppliers $j\in J$ considering the periods in $[k,l]$. We also tested a separation procedure using the more efficient algorithm described in Section~\ref{sec:separation}, which finds a most violated inequality for each item $i\in I$. We observed, in the preliminary experiments, that even though each round of cut could be performed in less computational time, the number of rounds to achieve good lower bounds became larger, implying larger overall computational times. For this reason, we did not use this separation procedure in the complete computational experiments.
}

\rafaelC{
Separation for violated cuts is only performed at the root node. The maximum number of rounds of cuts to be carried out by the solver was set to ten (the values 10, 20, 30, and 50 were tested).
We limited the size of intervals of the $(l,S_j)$-inequalities to be separated to five periods for instances with at most 50 periods and to two periods for those with at least 100 periods (the values 2, 5, 10, and 15 were tested).
}

\rafaelC{The values in $\{2,5,10\}$ were tested for the sizes of the intervals in the MIP heuristic, i.e., $K_{MH}$. The results for all these three configurations are reported in Section~\ref{sec:computationalheuristic}.}

\subsection{\rafaelC{Effectiveness of the preprocessing approach}}
\label{sec:computationalpreprocessing}

\rafaelC{
The results assessing the effectiveness of the preprocessing approach presented in Section~\ref{sec:preprocess} are summarized in Tables~\ref{tab:FLxPFLtype1}-\ref{tab:FLxPFLtype14}.
The values in each line represent average values over the corresponding instance group. The first column identifies the instance group. In the following columns, for FL and PFL, the tables present the average upper bound (ub), the average time in seconds to prove optimality for the instances that could be solved (time), the number of instances solved to optimality (\#opt), and the average open gap for those instances not solved to optimality (gap), which is determined for each instance as $100 \times \frac{ub - lb}{ub}$, where $lb$ is the lower bound achieved at the end of the execution. Besides, the last column (red) indicates the average reduction (in \%) of the $X$ variables eliminated by PFL using preprocessing.
The value '--' in the column time means that no instance in that group was solved to optimality, while its presence in the column gap indicates that all the instances in the group were optimally solved. Besides, the value 'n/a' represents the fact that executions were halted by the computer due to memory limitation.
}

\rafaelC{
Table~\ref{tab:FLxPFLtype1} shows the results for the original instances. The table indicates that considerable gains were achieved by PFL when compared to FL, especially for the larger instances.
Column red shows that substantial reductions were achieved by the preprocessing, starting from ~30\% for the smallest instance groups and reaching nearly 90\% for the instances with 100 periods.
Both approaches could solve to optimality all instances with at most 50 periods within a few seconds on average.
PFL shows a larger number of instances with 100 periods solved to optimality and lower average optimality gaps for the unsolved instances. 
We remark that the sizes of the formulations were prohibitive to be executed in the available computational resources for the larger instances with 200 periods, even with preprocessing.
}

\rafaelC{
Table~\ref{tab:FLxPFLtype2} displays the results for the new instances N1. It can be observed that both FL and PFL encountered more difficulties with these instances than with the original ones. It is noteworthy that due to the highest transaction costs, the achieved reductions were more modest, especially for the smaller instances with at most 20 periods, but these reductions still achieved around 50\% as the sizes of the instances increased.
None of the instances with at least 100 periods could be solved to optimality, and it can be observed that PFL achieved lower gaps for these instances.
Similar to what happened for the original instances, the formulations were prohibitively large to be executed in the available computational resources for the instances with 200 periods.
}

\rafaelC{
Table~\ref{tab:FLxPFLtype14} presents the results for the new instances N2. It can be noted that both approaches encountered more difficulties with these instances than with the original ones, but they were still more tractable than instances N1.
It can be observed that the reductions achieved by PFL were more modest when compared to those obtained for the original instances, especially for the smaller ones, but still more considerable than those for instances N1. Note, however, that the achieved reductions grew much larger as the sizes of the instances increased.
Considering the larger instance groups for which both FL and PFL solved the same amount of instances to optimality, (10,10,50) and (20,20,100), the reduction in the average times is remarkable.
Again, the formulations were prohibitively large to be solved using the available computational resources for the instances with 200 periods.
}


\begin{table}[H]
\color{black}
\scriptsize
    \centering
\begin{tabular}{c| c c c c| c c c c c }
\hline & \multicolumn{4}{c|}{FL}  & \multicolumn{5}{c}{PFL}  \\
Inst group & ub & time & \#opt &	gap & ub & time & \#opt &	gap & red \\	
\hline (3,3,10)	& 101940 & $<$0.1 &	15 & -- &	101940 &	$<$0.1 &	15 &	-- &	29.4 \\	
(3,3,15) &	147163 &	$<$0.1 &	15 &	-- &	147163 &	$<$0.1	 & 15 &	-- &	38.6 \\
(4,4,10) &	124526 &	$<$0.1 &	15 &	-- &	124526 &	$<$0.1 &	15 &	-- &	29.2 \\
(4,4,15) &	185681 &	$<$0.1 &	15 &	-- &	185681 &	$<$0.1 &	15 &	-- &	44.5 \\
(5,5,20) &	300866 &	0.1 &	15 &	-- &	300866 &	$<$0.1 &	15 &	-- &	53.5 \\
(10,10,50) &	1336982 &	8.0 &	15 &	-- &	1336982 &	1.6 &	15 &	-- &	77.5 \\
(15,15,100) &	3800975 &	541.8 &	8 &	0.07 &	3800975 &	289.6 &	12 &	0.06 &	88.9 \\	
(20,20,100) &	4950030 &	891.1 &	1 &	0.10 &	4949376 &	1383.1 &	7 &	0.07 &	88.3 \\	
(20,20,200) & n/a & n/a & n/a & n/a & n/a & n/a & n/a & n/a & n/a \\				
(50,50,200) & n/a & n/a & n/a & n/a & n/a & n/a & n/a & n/a & n/a \\				\hline
\end{tabular}
    \caption{Comparison between FL and PFL for the original instances.}
    \label{tab:FLxPFLtype1}
\end{table}

\begin{table}[H]
\color{black}
\scriptsize
    \centering
\begin{tabular}{c| c c c c| c c c c c }
\hline & \multicolumn{4}{c|}{FL}  & \multicolumn{5}{c}{PFL}  \\
Inst group & ub & time & \#opt &	gap & ub & time & \#opt &	gap & red \\	
\hline (3,3,10) &	126469 &	$<$0.1 &	15 &	-- &	126469 &	$<$0.1 &	15 &	-- &	0.0 \\
(3,3,15) &	199917 &	$<$0.1 &	15 &	-- &	199917 &	$<$0.1 &	15 &	-- &	0.1 \\
(4,4,10) &	161299 &	$<$0.1 &	15 &	-- &	161299 &	$<$0.1 &	15 &	-- &	0.0 \\
(4,4,15) &	245587 &	$<$0.1 &	15 &	-- &	245587 &	$<$0.1 &	15 &	-- &	0.4 \\
(5,5,20) &	387859 &	0.1 &	15 &	-- &	387859 &	0.1 &	15 &	-- &	1.7 \\
(10,10,50) &	1706395 &	301.0 &	13 &	0.19 &	1706395 &	402.6 &	14 &	0.22 &	22.3 \\
(15,15,100) &	4870354 &	-- &	0 &	0.82 &	4868906 &	-- &	0 &	0.78 &	49.3 \\
(20,20,100) &	6217228 &	-- &	0 &	0.86 &	6215298 &	-- &	0 &	0.82 &	50.2 \\
(20,20,200)  & n/a & n/a & n/a & n/a & n/a & n/a & n/a & n/a & n/a \\					
(50,50,200)  & n/a & n/a & n/a & n/a & n/a & n/a & n/a & n/a & n/a \\					\hline
\end{tabular}
    \caption{Comparison between FL and PFL for the new instances N1.}
    \label{tab:FLxPFLtype2}
\end{table}

\begin{table}[H]
\color{black}
\scriptsize
    \centering
\begin{tabular}{c| c c c c| c c c c c }
\hline & \multicolumn{4}{c|}{FL}  & \multicolumn{5}{c}{PFL}  \\
Inst group & ub & time & \#opt &	gap & ub & time & \#opt &	gap & red \\	
\hline (3,3,10) &	188177.9 &	$<$0.1 &	15 &	-- &	188177.9 &	$<$0.1 &	15 &	-- &	6.1 \\
(3,3,15) &	284028.9 &	$<$0.1 &	15 &	-- &	284028.9 &	$<$0.1 &	15 &	-- &	20.6 \\
(4,4,10) &	229132.5 &	$<$0.1 &	15 &	-- &	229132.5 &	$<$0.1 &	15 &	-- &	6.3 \\
(4,4,15) &	349938.3 &	$<$0.1 &	15 &	-- &	349938.3 &	$<$0.1 &	15 &	-- &	21.2 \\
(5,5,20) &	535305.7 &	0.1 &	15 &	-- &	535305.7 &	0.1 &	15 &	-- &	34.5 \\
(10,10,50) &	2270084.6 &	435.6 &	14 &	0.28 &	2270084.6 &	129.7 &	14 &	0.08 &	68.4 \\
(15,15,100) &	6207751.7 &	616.4 &	10 &	0.12 &	6207751.7 &	648.5 &	13 &	0.08 &	83.1 \\
(20,20,100) &	7620294.5 &	409.8 &	8 &	0.24 &	7620909.7 &	51.9 &	8 &	0.13 &	83.3 \\
(20,20,200)  & n/a & n/a & n/a & n/a & n/a & n/a & n/a & n/a & n/a \\					
(50,50,200)	 & n/a & n/a & n/a & n/a & n/a & n/a & n/a & n/a & n/a \\					\hline
\end{tabular}
    \caption{Comparison between FL and PFL for the new instances N2.}
    \label{tab:FLxPFLtype14}
\end{table}

\subsection{\rafaelC{Performance of the exact approaches}}
\label{sec:computationalexact}

\rafaelC{
Tables~\ref{tab:exacttype1}-\ref{tab:exacttype14} summarize the results comparing the exact approaches STD, PFL and BC. Note that, as it was evidenced in the previous subsection that PFL clearly outperforms FL, we do not present the results for the latter in this subsection.
In each of these tables, the first column gives the instance group. In the following columns, for STD, PFL, and BC, the tables present the average solver linear relaxation bound (glp), which is given for each instance by the lower bound reported by Gurobi at the end of its execution of the root node, the average upper bound (ub), the average time in seconds to prove optimality for the instances that could be solved (time), the number of instances solved to optimality (\#opt), and the average open gap for those not solved to optimality (gap). Additionally, for PFL, the table also gives the linear relaxation bound (lp). We report such value because it represents the theoretical linear relaxation bound of FL, PFL and that obtained when using STD and separating all the $(l,S_j)$-inequalities. Moreover, we can notice that the solution of the root node for BC is closer to such value than the solution of STD which supports our claim that the inequalities play an important role in tightening the optimality gap and strengthening the formulation.
}

\rafaelC{
Table~\ref{tab:exacttype1} shows the results for the original instances. It can be noticed that PFL outperformed STD and BC for all instance groups with up to 100 periods in terms of average times and number of instances solved to optimality, but it was far from being a good alternative for the instances with 200 periods, as it was already observed in Section~\ref{sec:computationalpreprocessing}. Regarding such instances, BC outperformed STD for the set of instances (20,20,200) but it is outperformed by the former for the set of instances (50,50,200). Nevertheless, considering all the computational experiments BC outperforms STD as it obtains smaller gaps, lower average times, and tighter lower bounds at the end of the processing of the root node, as presented by column glp. Although we do not explicitly show the number of enumerated nodes due to the already large number of columns in the tables, we could observe that both FL and BC outperform STD when we consider the number of enumerated nodes. As a final remark, notice that the bound at the end of the root node given by PFL is close to the pure linear relaxation bound, which is already better than the ones provided by STD and BC. 
}

\rafaelC{
Tables~\ref{tab:exacttype2}~and~\ref{tab:exacttype14} display the results for the new instances N1 and N2, respectively. It can be observed that the remarks made in the previous paragraph regarding the comparison between the performance of the three methods also hold, namely, that PFL outperforms BC and STD for the instance groups which could be processed without memory issues. Again, BC outperforms STD for all instance groups but the largest ones, i.e. (50,50,200). Also, one can argue that the instances N2 are probably more difficult than the original instances and that the instances N1 are possibly even harder. PFL was able to solve all the original instances up to size (20,20,100) but, for the set N2, it was only able to solve some instances with this size. For the set N1, on the other hand, it was only able to solve instances with sizes up to (15,15,100).  STD and BC were only able to solve instances with sizes up to (15,15,100) for N2 and (5,5,20) for N1. Furthermore, the gaps obtained by PFL for the unsolved instances are also larger for N2 than for the original set, and they are even larger for N1, which was not exactly the case for STD and BC.
}

\subsection{\rafaelC{Results for the MIP heuristic}}
\label{sec:computationalheuristic}

\rafaelC{
The summary of the results for the MIP heuristic is presented in Tables~\ref{tab:heurtype1}-\ref{tab:heurtype14}.
The first column represents the instance group. Next, for each of the three tested configurations of the MIP heuristic, denoted as MH-$K_{MH}$, the table shows the average solution value (ub), the average running time in seconds (time), and the percentual average optimality gap (gap$_{MH}$), obtained for each instance as $100 \times \frac{ub - bestlb}{ub}$, where $bestlb$ represents the best lower bound amongst those obtained with STD, PFL, and BC.
After that, column BestMIP presents the best average value considering the exact approaches tested in Section~\ref{sec:computationalexact}. 
For the original instances, Table~\ref{tab:heurtype1} also provides the best average upper bounds reported in \citeA{BasLeu05} and
\citeA{CarGonTre15}, identified by BL05 and CGT15, correspondingly. For CGT15, the presented value takes into consideration for each instance the best amongst the two variants of the heuristic described in their work. The best average heuristic solution values are shown in bold.} \marcioC{The careful reader might notice that some of the average times reported are bigger than the 600 seconds time limit, indicating that some of the heuristics overpass the limit time. Such observation is true and happens because the solver may take some extra time to finish the solution of the current node.
}

\begin{landscape}

\begin{table}[H]
\color{black}
\scriptsize
    \centering
\begin{tabular}{c| c c c c c| c c c c c c| c c c c c } \hline
&  \multicolumn{5}{c|}{STD} & \multicolumn{6}{c|}{PFL}  & \multicolumn{5}{c}{BC} \\
Inst group	&glp	&ub	&time	&\#opt	&gap	&lp	&glp	&ub	&time	&\#opt	&gap	&glp	&ub	&time	&\#opt	&gap\\ \hline
(3,3,10) &101509	&101940	&0.1	&15	& --  	&101920	&101935	&101940	& $<$0.1	&15	& --  	&101885	&101940	&0.6	&15	& -- \\
(3,3,15) &146366	&147163	&0.1	&15	&  -- 	&147151	&147156	&147163	& $<$0.1	&15	& --  	&147118	&147163	&0.6	&15	&  -- \\
(4,4,10) &124263	&124526	&0.1	&15	& --  	&124518	&124526	&124526	& $<$0.1	&15	& --  	&124509	&124526	&0.6	&15	&  -- \\
(4,4,15) &184696	&185681	&0.2	&15	& --  	&185674	&185681	&185681	& $<$0.1	&15	& --  	&185615	&185681	&0.7	&15	& --  \\
(5,5,20) &298964	&300866	&1.2	&15	& --  	&300793	&300839	&300866	& $<$0.1	&15	& --  	&300498	&300866	&1.1	&15	& --  \\
(10,10,50)	&1318889	&1336982	&540.0	&14	&0.02	&1336539	&1336606	&1336982	&1.6	&15	&  --	&1332484	&1336982	&205.5	&14	&0.04\\
(15,15,100)	&3693753	&3802248	&1740.6	&4	&0.15	&3798505	&3798669	&3800975	&289.6	&12	&0.06	&3736159	&3801012	&1631.1	&4	&0.07\\
(20,20,100)	&4782245	&5030121	&  -- 	&0	&4.53	&4944778	&4944913	&4949376	&1383.1	&7	&0.07	&4836356	&4949921	& --  	&0	&0.12\\
(20,20,200)	&9244459	&10578643	&  -- 	&0	&12.60	& n/a	& n/a	& n/a	& n/a	& n/a	& n/a	& 9563041	&9974134	& --  	&0	&4.41\\
(50,50,200)	&21090139	&25140386	&  -- 	&0	&16.18	& n/a	& n/a	& n/a	& n/a	& n/a	& n/a	& 20797964	&26210847	& --  	&0	&20.61\\ 
\hline
\end{tabular}
    \caption{Comparison between STD, PFL and BC for the original instances.}
    \label{tab:exacttype1}
\end{table}

\begin{table}[h]
\color{black}
\scriptsize
    \centering
\begin{tabular}{c| c c c c c| c c c c c c| c c c c c } \hline
&  \multicolumn{5}{c|}{STD} & \multicolumn{6}{c|}{PFL}  & \multicolumn{5}{c}{BC} \\
Inst group	&glp	&ub	&time	&\#opt	&gap	&lp	&glp	&ub	&time	&\#opt	&gap	&glp	&ub	&time	&\#opt	&gap\\ \hline
(3,3,10) &125604	&126469	& $<$0.1	&15	&  --	&126444	&126453	&126469	& $<$0.1	&15	&  --	&126361	&126469	&0.6	&15	&  \\
(3,3,15) &195138	&199917	&0.2	&15	&  --	&199586	&199720	&199917	& $<$0.1	&15	&  --	&199168	&199917	&0.7	&15	&  \\
(4,4,10) &159774	&161299	&0.1	&15	&  --	&161158	&161205	&161299	& $<$0.1	&15	&  --	&161112	&161299	&0.6	&15	&  \\
(4,4,15) &239576	&245587	&0.5	&15	&  --	&245237	&245375	&245587	& $<$0.1	&15	&  --	&244681	&245587	&0.8	&15	&  \\
(5,5,20) &372779	&387859	&5.3	&15	&  --	&387133	&387317	&387859	&0.1	&15	&0.00	&385056	&387859	&2.7	&15	&  \\
(10,10,50)	&1573909	&1707243	&  --	&0	&2.86	&1699417	&1699517	&1706395	&402.6	&14	&0.22	&1634584	&1706790	&  --	&0	&2.13\\
(15,15,100)	&4269343	&4990980	&  --	&0	&15.02	&4827029	&4827116	&4868906	&  --	&0	&0.78	&4252329	&4919194	&  --	&0	&7.86\\
(20,20,100)	&5463144	&6528403	&  --	&0	&16.81	&6160809	&6160863	&6215298	&  --	&0	&0.82	&5437357	&6414232	&  --	&0	&15.28\\
(20,20,200)	&10015474	&13630978	&  --	&0	&26.70	& n/a	& n/a	& n/a	& n/a	& n/a	& n/a	&10470462	&13396444	&  --	&0	&22.18\\
(50,50,200)	&21831899	&32707523	&  --	&0	&33.19	& n/a	& n/a	& n/a	& n/a	& n/a	& n/a	&21668579	&33205632	&  --	&0	&34.71\\
\hline
\end{tabular}
    \caption{Comparison between STD, PFL and BC for the new instances N1.}
    \label{tab:exacttype2}
\end{table}

\begin{table}[h]
\color{black}
\scriptsize
    \centering
\begin{tabular}{c| c c c c c| c c c c c c| c c c c c } \hline
&  \multicolumn{5}{c|}{STD} & \multicolumn{6}{c|}{PFL}  & \multicolumn{5}{c}{BC} \\
Inst group	&glp	&ub	&time	&\#opt	&gap	&lp	&glp	&ub	&time	&\#opt	&gap	&glp	&ub	&time	&\#opt	&gap\\ \hline
(3,3,10) &185527	&188178	&0.1	&15	& --  	&188139	&188165	&188178	& $<$0.1	&15	&  --	&187657	&188178	&0.6	&15	&  \\
(3,3,15) &275458	&284029	&0.3	&15	& --  	&284029	&284029	&284029	& $<$0.1	&15	&  --	&282053	&284029	&0.7	&15	&  \\
(4,4,10) &224957	&229133	&0.2	&15	& --  	&229049	&229071	&229133	& $<$0.1	&15	&  --	&227504	&229133	&0.7	&15	&  \\
(4,4,15) &337840	&349938	&1.1	&15	& --  	&349314	&349609	&349938	& $<$0.1	&15	&  --	&345123	&349938	&1.3	&15	&  \\
(5,5,20) &510609	&535306	&13.0	&15	& --  	&534088	&534304	&535306	&0.1	&15	&  --	&526143	&535306	&14.0	&15	&  \\
(10,10,50)	&2041641	&2270182	&601.0	&2	&1.62	&2256790	&2257975	&2270085	&129.7	&14	&0.08	&2134482	&2270110	&1403.8	&3	&1.38\\
(15,15,100)	&5288732	&6344819	&  --	&0	&17.17	&6196495	&6197144	&6207752	&648.5	&13	&0.08	&5203324	&6233441	&  -- 	&0	&1.84\\
(20,20,100)	&6544370	&8167961	&  --	&0	&20.18	&7605744	&7606692	&7620910	&51.9	&8	&0.13	&6393756	&7976353	&  -- 	&0	&19.28\\
(20,20,200)	&11465361	&17640815	&  --	&0	&35.11	& n/a	& n/a	& n/a	& n/a	& n/a	& n/a	&11907617	&17281521	&  -- 	&0	&31.51\\
(50,50,200)	&22433576	&39154536	&  --	&0	&42.65	& n/a	& n/a	& n/a	& n/a	& n/a	& n/a	&22087540	&40837776	&  -- 	&0	&45.83\\
\hline
\end{tabular}
    \caption{Comparison between STD, PFL and BC for the new instances N2.}
    \label{tab:exacttype14}
\end{table}

\end{landscape}

\begin{landscape}

\begin{table}[H]
\color{black}
\scriptsize
    \centering
\begin{tabular}{c| c c c| c c c| c c c | c | c | c } \hline
	& \multicolumn{3}{c|}{MH-2}	& \multicolumn{3}{c|}{MH-5}	& \multicolumn{3}{c|}{MH-10}	& BestMIP & BL05	& CGT15	\\
Inst group	&ub	&time	&gap$_{MH}$	&ub	&time	&gap$_{MH}$	&ub	&time	&gap$_{MH}$	&ub	&ub	&ub\\ \hline
(3,3,10) &102237	&$<$0.1	&0.29	&\textbf{101940}	&$<$0.1	&0.00	& \textbf{101940}	&$<$0.1	&0.00	&101940 &102584	&101954	\\
(3,3,15) &147912	&$<$0.1	&0.51	&\textbf{147163}	&$<$0.1	&0.00	&\textbf{147163}	&$<$0.1	&0.00	&147163 &147887	&\textbf{147163}	\\
(4,4,10) &125122	&$<$0.1	&0.49	&\textbf{124526}	&$<$0.1	&0.00	&\textbf{124526}	&$<$0.1	&0.00	&124526 &126345	&\textbf{124526}	\\
(4,4,15) &186073	&$<$0.1	&0.23	&185688	&$<$0.1	&0.00	&\textbf{185681}	&$<$0.1	&0.00	&185681 &187320	&185699	\\
(5,5,20) &301666	&$<$0.1	&0.28	&300881	&$<$0.1	&0.01	&\textbf{300866}	&$<$0.1	&0.00	&300866 &303529	&300900	\\
(10,10,50)	&1338660	&0.9	&0.13	&1336991	&1.3	&0.00	&\textbf{1336982}	&1.5	&0.00	&1336982 &1357190	&1337662	\\
(15,15,100)	&3806436	&302.9	&0.16	&3801107	&266.1	&0.02	&\textbf{3800977}	&272.0	&0.01	&3800975 &3856800	&3810899	\\
(20,20,100)	&4954723	&517.1	&0.15	&4949464	&531.9	&0.04	&\textbf{4949401}	&535.9	&0.04	&4949376 &5048826	&4975149	\\
(20,20,200)	&9825028	&600.1	&2.97	&9818199	&602.0	&2.90	&\textbf{9818136}	&600.8	&2.90	&9974134 &10026074	&9914548	\\
(50,50,200)	&\textbf{23120817}	&601.8	&8.86	&28295969	&629.5	&18.36	&25002196	&616.3	&12.32	&25140386 &25373121	&23457449	\\ \hline
\end{tabular}
    \caption{Results obtained by MH for the original instances.}
    \label{tab:heurtype1}
\end{table}

\begin{table}[H]
\color{black}
\scriptsize
    \centering
\begin{tabular}{c| c c c| c c c| c c c | c } \hline
	& \multicolumn{3}{c|}{MH-2}	& \multicolumn{3}{c|}{MH-5}	& \multicolumn{3}{c|}{MH-10}	&BestMIP\\
Inst group	&ub	&time	&gap$_{MH}$	&ub	&time	&gap$_{MH}$	&ub	&time	&gap$_{MH}$	&ub\\ \hline
(3,3,10) &136452	&$<$0.1	&7.36	&126888	&$<$0.1	&0.34	&\textbf{126469}	&$<$0.1	&0.00	&126469\\
(3,3,15) &212629	&$<$0.1	&5.99	&201466	&$<$0.1	&0.74	&\textbf{200112}	&$<$0.1	&0.09	&199917\\
(4,4,10) &172487	&$<$0.1	&6.56	&162793	&$<$0.1	&0.93	&\textbf{161299}	&$<$0.1	&0.00	&161299\\
(4,4,15) &256086	&$<$0.1	&4.18	&247462	&$<$0.1	&0.80	&\textbf{245715}	&$<$0.1	&0.06	&245587\\
(5,5,20) &402094	&$<$0.1	&3.59	&389528	&0.1	&0.45	&\textbf{387930}	&0.1	&0.02	&387859\\
(10,10,50)	&1764190	&155.5	&3.30	&1712482	&142.1	&0.37	&\textbf{1706466}	&164.6	&0.02	&1706395\\
(15,15,100)	&5003845	&600.2	&3.45	&4881966	&600.1	&1.04	&\textbf{4871949}	&600.3	&0.84	&4868906\\
(20,20,100)	&6340071	&600.1	&2.77	&6230498	&600.0	&1.06	&\textbf{6216874}	&600.2	&0.84	&6215298\\
(20,20,200)	&12709547	&600.0	&17.99	&\textbf{12457212}	&602.1	&16.32	&12597927	&605.9	&17.23	&13396444\\
(50,50,200)	& \textbf{107583588}	&600.7	&79.72	& n/a	& n/a	& n/a	&118787855	&619.7	&81.63	&32707523\\
\hline
\end{tabular}
    \caption{Results obtained by MH for the new instances N1.}
    \label{tab:heurtype2}
\end{table}

\begin{table}[H]
\color{black}
\scriptsize
    \centering
\begin{tabular}{c| c c c| c c c| c c c | c } \hline
	& \multicolumn{3}{c|}{MH-2}	& \multicolumn{3}{c|}{MH-5}	& \multicolumn{3}{c|}{MH-10}	&BestMIP\\
Inst group	&ub	&time	&gap$_{MH}$	&ub	&time	&gap$_{MH}$	&ub	&time	&gap$_{MH}$	&ub\\ \hline
(3,3,10) &189082	&$<$0.1	&0.49	&\textbf{188178}	&$<$0.1	&0.00	&\textbf{188178}	&$<$0.1	&0.00	&188178\\
(3,3,15) &284554	&$<$0.1	&0.19	&\textbf{284029}	&$<$0.1	&0.00	&\textbf{284029}	&$<$0.1	&0.00	&284029\\
(4,4,10) &229154	&$<$0.1	&0.01	&\textbf{229133}	&$<$0.1	&0.00	&\textbf{229133}	&$<$0.1	&0.00	&229133\\
(4,4,15) &\textbf{349938}	&$<$0.1	&0.00	&\textbf{349938}	&$<$0.1	&0.00	&\textbf{349938}	&$<$0.1	&0.00	&349938\\
(5,5,20) &535611	&$<$0.1	&0.06	&\textbf{535306}	&0.1	&0.00	&\textbf{535306}	&0.1	&0.00	&535306\\
(10,10,50)	&2270086	&69.5	&0.01	&\textbf{2270085}	&109.7	&0.01	&\textbf{2270085}	&142.7	&0.01	&2270085\\
(15,15,100)	&\textbf{6207752}	&198.4	&0.01	&\textbf{6207752}	&241.4	&0.01	&6207832	&260.3	&0.01	&6207752\\
(20,20,100)	&\textbf{7620910}	&286.8	&0.07	&7620997	&294.9	&0.07	&7620945	&304.1	&0.07	&7620910\\
(20,20,200)	&\textbf{15332550}	&522.1	&22.82	&15333970	&533.6	&22.83	&15333724	&533.9	&22.83	&17281521\\
(50,50,200)	&\textbf{32834743}	&601.2	&31.67	&32837872	&608.4	&31.68	&175160673	&649.1	&84.30	&39154536\\
\hline
\end{tabular}
    \caption{Results obtained by MH for the new instances N2.}
    \label{tab:heurtype14}
\end{table}

\end{landscape}

\rafaelC{
Table~\ref{tab:heurtype1} shows the results for the original instances. MH-10 achieved the best results for all instance groups but the largest one. MH-5 and MH-10 achieved solutions within 0.1\% of optimality for all instance groups with at most 100 periods. For the largest instance group, (50,50,200), MH-2 achieved the best average solutions, indicating that, even with the heuristic reduction, the formulations of MH-5 and MH-10 already became too large to be reasonably tractable using the available computational resources.
It is noteworthy that, using our heuristics, it was possible to obtain average values which are at least as good as those of BL05 and CG15 for all instance groups, with strictly better values for eight out of the ten. Furthermore, heuristic solution values improving those using the exact MIP approaches within the time limit were obtained for the larger instance groups with 200 periods. Note that even with the solver limited to run for at most 600 seconds, a few reported average values for the larger instances with 200 periods using MH-5 and MH-10 are a little higher than this allowed time limit. The reason for that is related to difficulties of the solver in finishing its execution for certain instances, possibly encountered in the tasks of freeing the memory given the sizes of the formulations or finishing a step of the solution method used by the solver (commonly the barrier method).
}

\rafaelC{
Table~\ref{tab:heurtype2} presents the results for the new instances N1. For these instances, MH-10 achieved average gaps below 1.0\% for all instances with at most 100 periods. 
MH-5 and MH-2 obtained the best average solution values for the groups (20,20,200) and (50,50,200), respectively. It is noteworthy that MH-2 obtained much larger gaps for these instances when compared to its results for the original instances. The reason for this behavior is probably related to the fact that the larger transaction costs imply fewer periods with setups and thus larger intervals between orders for low-cost solutions. Note that the MIP heuristic did not perform very well for the instance group (50,50,200), as the average solution values are much higher than that of the best exact approach considering the time limit. These observations strengthen the argument that the most difficult tested instances are probably those in set N1. We remark that the values 'n/a' observed for MH-5 in the instance group (50,50,200) are related to difficulties of the solver in finishing its execution, probably due to memory issues or numerical difficulties in generating a basic feasible solution after solving the linear relaxation using the barrier method.
}

\rafaelC{
Table~\ref{tab:heurtype14} displays the results for the new instances N2. Variants MH-5 and MH-10 obtained average gaps below 0.1\% for all the instance groups with at most 100 periods. MH-5 obtained the best average solution values for seven out of the ten instance groups. For this instance set, MH-2 outperformed the other variants for the instance groups with at least 100 periods, as it obtained the best average values for all of them. We remark, though, that differently from what was observed for the original instances, there is still a reasonably high open gap for the instances with 200 periods. One possible reason is the fact that the setup costs are very high, and thus, certain setup decisions can strongly influence the costs of the solutions.
}

\section{Final conclusions}

In this paper, we considered the multi-item inventory lot-sizing problem with supplier selection. The complexity of the problem was an open question and thus we have shown that it is NP-hard. Moreover, we have proposed a facility location extended formulation together with a preprocessing scheme, valid inequalities in the original space of variables, and an easy to implement mixed integer programming (MIP) heuristic. \rafaelC{Additionally, we introduced two new benchmark sets of instances with different cost parameters to complement the original benchmark set in order to better assess the performance of the proposed approaches.}


\rafaelC{
Computational experiments have shown that the preprocessing scheme was able to reach a considerable reduction in the number of variables considered for optimization. This established the preprocessed facility location formulation as a very effective approach for optimally solving instances with up to 100 periods, as nearly all the instances in the original and N2 sets could be solved to optimality. 
Besides, the valid inequalities implemented in a branch-and-cut approach could \marcioC{successfully} improve the capacity of the solver to deal with all but the largest instance groups (50,50,200). 
Finally, the proposed MIP heuristic was able to encounter high-quality results, outperforming those obtained by a state-of-the-art approach.
The newly proposed benchmark instances have shown to be more challenging for the proposed approaches than the original set available in the literature, especially instances N1.
}

\rafaelC{A possible direction for future research is a polyhedral study of the multi-item inventory lot-sizing problem with supplier selection. Besides, we}
\marcioC{
remark that the problem treated in this paper is for a two-echelon supply chain composed of one buyer and multiple suppliers. Thus, it would be interesting to explore this problem in a multi-echelon supply chain. Moreover, in a real environment, one might be subject to certain types of constraints or situations not considered in this work. In many agricultural supply chains, for instance, one is subject to shortages of products due to natural incidents. In the service and supply industry, one might face backordering due to obstacles in the deliveries. Such distinct features are some research avenues that could be investigated in the future.}
\marcioC{Finally, we point out that all the models treated in this work are deterministic and it would be also interesting to model and study the impact of uncertain client demands, which are more realistic in some fields.}

\label{sec:conclusions}

\vspace{0.8cm}

{
\noindent \small 
\textbf{Acknowledgments:}
Work of Rafael A. Melo was supported by Universidade Federal da Bahia, the Brazilian Ministry of Science, Technology, Innovation and Communication (MCTIC); the State of Bahia Research Foundation (FAPESB); and the Brazilian National Council for Scientific and Technological Development (CNPq). \rafaelC{The authors are thankful to four anonymous reviewers whose insightful comments helped to significantly improve the quality of this paper.}
}

\bibliography{main}

\bibliographystyle{apacite}

\end{document}